\newtheorem{theo}{Theorem}
\newtheorem{lemm}[theo]{Lemma}
\newtheorem{coro}[theo]{Corollary}
\newtheorem{prop}[theo]{Proposition}
\newcommand{\Int}{\mathbb{Z}}
\newcommand{\Rat}{\mathbb{Q}}
\newcommand{\Real}{\mathbb{R}}
\newcommand{\CP}{\mathbb{C}P}
\newcommand{\Res}{\mathrm{Res}}
\newcommand{\ord}{\mathrm{ord}}
\newcommand{\cmplx}{\mathbb{C}}
\begin{document}
\title[First Pontrjagin 
class of homotopy CP]{The first Pontrjagin
classes of \\ homotopy complex projective spaces}
\author[Y.~Kitada]{Yasuhiko Kitada}
\address[Y.~Kitada]{Yokohama National University}
\email[Y.~Kitada]{ykitada@ynu.ac.jp}
\author[M.~Nagura]{Maki Nagura}
\address[M.~Nagura]{Yokohama National University}
\email[M.~Nagura]{maki@ynu.ac.jp}
\subjclass[2000]{57R20, 57R55, 57R67}
\maketitle
\begin{abstract}
Let $M^{2n}$ be a closed smooth manifold homotopy equivalent to
the complex projective space $\CP(n)$.  The purpose of this 
paper is to show that when $n$ is even, the difference
of the first Pontrjagin classes between $M^{2n}$ and $\CP(n)$
is divisible by 16.
\end{abstract}
\section{Introduction and the main theorem }
Let $M^{2n}$ be a closed smooth manifold and $\CP(n)$ be
the complex projective space of complex dimension $n$. 
If there is a homotopy equivalence $ f : M^{2n}\rightarrow \CP(n)$,
we say that $M^{2n}$ is a homotopy projective space or more briefly
a homotopy $\CP(n)$.  
When $M^{2n}$ is a homotopy projective space and
 $ f : M^{2n}\rightarrow \CP(n)$ is a homotopy equivalence, 
 define an integer $\delta(M)$ by
$p_1(M)-f^{*}(p_1(\CP(n)))=\delta(M) u^2$, 
where $p_1(M)$ is the first
Pontrjagin class of $M^{2n}$ and $u$ is a generator of
$H^2(M^{2n}; \Int)$.  Clearly $\delta(\CP(n))$ is zero and
$\delta(M)$ measures the difference between the first Pontrjagin classes
of the two manifolds $M^{2n}$ and $\CP(n)$.
When $n=3$,  Montgomery and Yang studied
and classified homotopy complex projective spaces $M^6$ 
and proved that the first Pontrjagin class $p_1(M)$ 
is of the form $p_1(M^6)=(4+24\alpha(M))u^2$
for some integer $\alpha(M)$ where $u\in H^2(M^6;\Int)$ 
is a generator.
They also showed that the diffeomorphism type of 
a homotopy $\CP(3)$ is determined by 
the first Pontrjagin class 
(\cite{bro} p.25, \cite{may}, \cite{mayf} Theorem A).
In general dimensions, Masuda and Tsai proved that
for a homotopy complex projective space $M^{2n}$,  
$\delta(M^{2n})$ is divisible by 24 (\cite{mat} Lemma 5.1).  
In 1971, Brumfiel calculated index surgery obstructions 
with target $\CP(4)$ and $\CP(6)$ 
and reported that $\delta(M^{2n})$ is divisible by 16 
for $n=4$ and $n=6$ (\cite{br} Lemma I.5).

Brumfiel's calculation was
based on the calculation 
of the cohomology group of the classifying space $G/O$.
But the details of his calculation are not published. 
So we took one step a way from the surgery theory and looked
closely at Hirzebruch's index theorem.  
We were able to obtain our final result in the following theorem.

\medskip
\noindent {\bf Main Theorem.}\ \ {\sl 
Let $M^{4k}$ be a homotopy $\CP(2k)$.  
Then $\delta(M^{4k})$ is divisible by 16.}
\medskip

In the previous work of the first author \cite{kit}, 
there was a restriction on the 2-order of the integer $k$. 
 In the present paper,  this restriction is completely removed.

\section{Preliminaries:  formal power series 
and elementary number theory}

We shall consider the ring $\Rat[[x]]$ of formal power series 
with rational coefficients.  
An element $f(x)\in \Rat[[x]]$ can be written as
\[ f(x)=\sum_{i\ge0} c_i x^i,\quad (c_i\in\Rat). \]
If $f(x)$ is not $0$, then there exists a non-negative 
integer $j$ such that $c_i=0$ for all $i<j$ and
$c_j\ne 0$.  This number $j$ is the {\it order} of $f(x)$ 
and expressed by $\ord(f(x))$.  If $\rho=\ord(f(x))$, then
$f(x)$ can be expressed as 
\[ f(x)=x^\rho(c_\rho+c_{\rho+1}x+c_{\rho+2}x^2+ \cdots ).\]
From this we see that the quotient field of
$\Rat[[x]]$ is the ring of formal Laurent series
\[ F(x)=\sum_{i\ge r} c_i x^i ,\] 
where $c_i\in \Rat$ and $r\in \Int$,
with only a finite number of negative degree terms.
We shall simply call this expression a formal Laurent series. 
Given a formal Laurent series $F(x)$, 
we shall denote the coefficient of $x^i$ in $F(x)$ by
$(F(x))_i$ or by $(F(x))_{x^i}$.  
The latter notation is usually used 
when we want to specify the variable $x$.  
The coefficient of $x^{-1}$ is called the
formal residue of the formal Laurent series $F(x)$ and
often denoted by $\Res_x(F(x))$.

Let $G(y)=\sum_{i\ge 0} r_i y^i$ be a formal power series
with $r_0=G(0)=0$. 
Then for any formal Laurent series $F(x)$, 
we can perform substitution $x=G(y)$ 
to obtain a new formal Laurent series $F(G(y))$ 
with variable $y$.  This series is expressed
by $F\circ G$.  
When $F(x)$ is a formal power series, 
then $F\circ G$ is
also a formal power series.  
Corresponding to the substitution, we have 
the invariance of formal residues is given 
by the following lemma.

\begin{lemm}\label{twth}
 Let $F(x)=\sum_{i} q_i x^i$ be a formal Laurent
series and $G(y)=\sum_{i} r_i y^i$ 
be a formal power series with $r_0=G(0)=0$
and $r_1\ne 0$.
Then we have
\begin{equation}\label{resdform}
 \Res_x(F(x))=\Res_y (F(G(y))\, G'(y)),  
\end{equation}
where $G'(y)=\sum_i (i+1)r_{i+1}y^i $ 
is the formal derivative of $G(y)$.
\end{lemm}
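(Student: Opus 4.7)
The plan is to exploit $\Rat$-linearity of the residue map and reduce the identity to the single nontrivial monomial case $F(x)=x^{-1}$.

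First I would verify that the substitution $F\circ G$ is well defined as a formal Laurent series in $y$. Since $F(x)$ has only finitely many terms of negative degree and $G(y)$ has $y$-order exactly $1$ (because $r_1\ne 0$), the negative part substitutes to a finite sum of Laurent series of negative order, while the non-negative part $\sum_{i\ge 0} q_i G(y)^i$ converges in the $y$-adic topology (each $G(y)^i$ lies in $y^i\,\Rat[[y]]$). Both $\Res_x$ and $\Res_y$ are $\Rat$-linear and continuous with respect to these topologies, so it suffices to establish \eqref{resdform} for each monomial $F(x)=x^n$, $n\in\Int$.

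Second, for $n\ne -1$ I would use the telescoping identity
\[
G(y)^n\,G'(y)=\frac{1}{n+1}\,\frac{d}{dy}\bigl(G(y)^{n+1}\bigr).
\]
The crucial general fact is that the formal derivative of any formal Laurent series $\sum_k a_k y^k$ equals $\sum_k k a_k y^{k-1}$, and the coefficient of $y^{-1}$ on the right is $0\cdot a_0=0$. Hence $\Res_y(G(y)^n G'(y))=0=\Res_x(x^n)$ whenever $n\ne -1$.

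Third, for the remaining case $n=-1$ I would factor $G(y)=r_1 y\bigl(1+h(y)\bigr)$ with $h(y)\in y\,\Rat[[y]]$, so that
\[
\frac{G'(y)}{G(y)}=\frac{1}{y}+\frac{h'(y)}{1+h(y)}.
\]
The second summand is an honest formal power series and hence has residue $0$, so $\Res_y\bigl(G^{-1}G'\bigr)=1=\Res_x(x^{-1})$, which combined with the previous step proves \eqref{resdform}. The only real obstacle is the first, bookkeeping paragraph: once one is satisfied that the substitution is legitimate and that the residue commutes with the infinite sum, the algebra in the monomial cases is immediate.
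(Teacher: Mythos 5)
Your proposal is correct and follows essentially the same route as the paper: reduce by linearity to $F(x)=x^n$, observe that $G(y)^nG'(y)$ is a formal derivative (hence has zero residue) for $n\ne -1$, and compute $\Res_y(G'/G)=1$ directly for $n=-1$. Your added care about well-definedness of the substitution and your factorization $G'/G = 1/y + h'(y)/(1+h(y))$ are minor refinements of the paper's argument, which writes $G'/G = H(y)/y$ with $H(0)=1$.
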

\begin{proof} Since a formal Laurent series is a linear combination
of $x^n$\ $(n\in\Int)$, it is enough to show the formula for the special case
$F(x)=x^n$.  Unless $n=-1$,  since $F(G(y)) G'(y)=G(y)^{n}G'(Y)$ is a
formal derivative of $G(y)^{n+1}/(n+1)$, its residue with
respect to the variable $y$ is zero.  Thus the formula holds
for $F(x)=x^n$ with $n\ne -1$.  For the case $n=-1$, we have

\begin{align*}
F(G(y))G'(y)& =\frac{G'(y)}{G(y)}
=\frac{\sum_i i r_i y^{i-1}}{\sum_i r_i y^i} \\
&=\frac{\sum_{i\ge 1} i r_i y^{i-1}}{y \sum_{i\ge 1}r_i y^{i-1}}
=\frac{H(y)}{y},
\end{align*}
for some formal power series
 $H(y)=\sum_{i\ge 0} c_i y^i$ with $c_0=1$.
Therefore we have 
\[ \Res_y(G'(y)/G(y))=1.   \]
This completes the proof.
\end{proof}

For a prime $p$,  
$\Int_{(p)}$ is a subring of $\Rat$ 
composed of rational numbers that can be expressed 
as $a/b\ (a,b\in\Int, b\ne 0)$,  with $(b,p)=1$.  
Its invertible element
is of the form $a/b$ with $(a,p)=1$ and $(b,p)=1$. 
If  the coefficients of $F(x)$ and $G(y)$ are 
in $\Int_{(p)}$, and if in addition $G'(0)$ is an invertible element, 
then the substitutions and the formula (\ref{resdform}) in
Lemma \ref{twth} can be performed  
in the same coefficient ring $\Int_{(p)}$.
The following is an inverse function theorem 
in the formal power series theory.

\begin{lemm}\label{invfunc} Let $p$ be a prime and
 $F(x)=\sum_{i\ge0}{q_i x^i}\in\Int_{(p)}[[x]]$ be
a formal power series and 
assume that $F(0)=q_0=0$ and $F'(0)=q_1$ is invertible in
$\Int_{(p)}$.  Then there exists a unique formal power series
$G(y)$ in $\Int_{(p)}[[y]]$ with $G(0)=0$ such that
$F(G(y))=y$.
\end{lemm}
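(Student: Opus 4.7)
My plan is to construct $G(y)=\sum_{i\ge1} s_i y^i$ coefficient by coefficient, solving the equation $F(G(y))=y$ inductively and showing at each stage that the resulting $s_k$ lies in $\Int_{(p)}$. Uniqueness will then be immediate from the fact that each coefficient is forced.

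First I would write out
\[
F(G(y))=\sum_{n\ge 1} q_n\, G(y)^n.
\]
The key algebraic observation is that the coefficient of $y^k$ in $G(y)^n$ is a polynomial in $s_1,\dots,s_{k-n+1}$ with integer coefficients (specifically a multinomial sum of products $s_{i_1}s_{i_2}\cdots s_{i_n}$ with $i_1+\cdots+i_n=k$ and each $i_j\ge 1$). In particular, for $n\ge 2$ this polynomial involves only $s_j$ with $j\le k-1$, while for $n=1$ the coefficient is simply $s_k$. Furthermore, $G(y)^n$ contributes nothing to the coefficient of $y^k$ when $n>k$. Consequently the coefficient of $y^k$ in $F(G(y))$ has the shape
\[
q_1 s_k + P_k(s_1,\dots,s_{k-1};\,q_2,\dots,q_k),
\]
where $P_k$ is a polynomial in the indicated variables with coefficients in $\Int$.

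Matching coefficients with $y$ gives $q_1 s_1=1$ and $q_1 s_k + P_k=0$ for $k\ge 2$. Since $q_1$ is invertible in $\Int_{(p)}$, I define $s_1:=q_1^{-1}$ and, inductively assuming $s_1,\dots,s_{k-1}\in\Int_{(p)}$, set $s_k:=-q_1^{-1}P_k(s_1,\dots,s_{k-1};q_2,\dots,q_k)$. Because $P_k$ is a polynomial with integer coefficients evaluated on elements of $\Int_{(p)}$, this quantity lies in $\Int_{(p)}$, and the induction proceeds. Formally, the resulting $G(y)=\sum s_i y^i$ satisfies $F(G(y))=y$ by construction, since every coefficient of $F(G(y))-y$ vanishes.

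Finally, for uniqueness, if $\widetilde G(y)=\sum \tilde s_i y^i$ is another solution in $\Int_{(p)}[[y]]$ with $\widetilde G(0)=0$, the same coefficient comparison forces $\tilde s_1=q_1^{-1}=s_1$ and, by induction, $\tilde s_k=s_k$ for every $k$. I do not expect any real obstacle here; the proof is a routine but careful bookkeeping argument, the only point requiring attention being the verification that the recursion for $s_k$ never introduces denominators other than powers of $q_1$, which is exactly where the invertibility hypothesis on $q_1$ in $\Int_{(p)}$ is used.
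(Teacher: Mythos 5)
Your proposal is correct and follows essentially the same route as the paper: both determine the coefficients of $G$ inductively from the relation $F(G(y))=y$, using that the coefficient of $y^k$ in $G(y)^n$ for $n\ge 2$ involves only $s_1,\dots,s_{k-1}$, so that each new coefficient is $-q_1^{-1}$ times an integer polynomial in previously determined elements of $\Int_{(p)}$. The uniqueness argument is likewise the same forced-coefficient induction.
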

\begin{proof} Let us write $G(y)=\sum_{i\ge 0}r_i y^i$. 
Then we shall show that the coefficients $\{r_i\}$ of $G(y)$ 
are inductively uniquely determined as an element of
$\Int_{(p)}$, by the equality $F(G(y))=y$ starting
from the initial condition $G(0)=r_0=0$.  
Let us write $(G(y))^i=\sum_j r^{(i)}_j y^j$ 
then we can easy see
that $r^{(i)}_j = 0$ for $j<i$. 
From the equality  $F(G(y))=y$,
we have $\sum_i q_i(\sum_j r^{(i)}_j y^j)=y$. 
This reduces to
\begin{equation}\label{inddef}
\sum_i \biggl(\sum_{i\le j} q_i r^{(i)}_j\biggr)y^j = y. \nonumber
\end{equation}
From this we have $q_1 r^{(1)}_1=1$.  
Since $q_1$ is invertible,
$r_1=r^{(1)}_1\in \Int_{(p)}$ is determined.  
For $j\ge 2$, we have
\begin{equation}\label{inddeff} \sum_{i=1}^j q_i r^{(i)}_j 
= q_1 r^{(1)}_j + q_2 r^{(2)}_j +\cdots + q_j r^{(j)}_j=0. 
\end{equation}
If $r_1, r_2, \ldots, r_{j-1}$ are determined 
as elements of $\Int_{(p)}$, then
$r_1^{(i)}, r_2^{(i)}, \ldots, r_j^{(i)}$ are determined 
in $\Int_{(p)}$ for $1<i\le j$ 
as the coefficients of the polynomial
\[ (r_1 y+r_2y^2+\cdots + r_{j-1}y^{j-1})^i. \]
This shows that $r_j^{(2)}, r_j^{(3)},\ldots, r_j^{(j)}$ 
are determined
and from  (\ref{inddeff}), $r_j=r_j^{(1)}$ 
is determined as an
element of $\Int_{(p)}$. 
\end{proof}

Let $\Int_{(p)}[[x]]_1$ stand for the subset of
all formal power series 
with coefficients in the
ring $\Int_{(p)}[[x]]$, with constant term $1$.  
As a corollary to the inverse function theorem 
we have a generalized binomial expansion formula.
\begin{coro}\label{bino}
Let $p$ be a prime and $q$ be a natural number 
relatively prime to $p$.
Then there exists a unique formal power series 
$v(x)\in \Int_{(p)}[[x]]_1$
satisfying $(v(x))^q=1+x$.
\end{coro}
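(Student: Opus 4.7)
The plan is to derive the corollary as a direct application of Lemma \ref{invfunc}, with $v(x)=1+G(x)$ obtained by inverting a suitable power series.

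First I would set $F(y)=(1+y)^q-1\in\Int[[y]]\subset\Int_{(p)}[[y]]$. This is actually a polynomial, and its binomial coefficients lie in $\Int$, so the hypothesis on the coefficient ring of Lemma \ref{invfunc} is satisfied. Observe that $F(0)=0$ and $F'(0)=q$. Because $q$ is a natural number relatively prime to $p$, it is a unit in $\Int_{(p)}$, so the invertibility hypothesis of Lemma \ref{invfunc} is met.

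Next, I would invoke Lemma \ref{invfunc} to obtain a unique formal power series $G(x)\in\Int_{(p)}[[x]]$ with $G(0)=0$ such that $F(G(x))=x$, i.e.\ $(1+G(x))^q-1=x$. Setting $v(x):=1+G(x)$, we get a formal power series with constant term $1$, hence $v(x)\in\Int_{(p)}[[x]]_1$, and by construction $(v(x))^q=1+x$, which gives existence.

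For uniqueness, suppose $w(x)\in\Int_{(p)}[[x]]_1$ also satisfies $(w(x))^q=1+x$. Write $w(x)=1+H(x)$ with $H(x)\in\Int_{(p)}[[x]]$ and $H(0)=0$. Then $F(H(x))=(1+H(x))^q-1=x$, so by the uniqueness clause in Lemma \ref{invfunc} we must have $H(x)=G(x)$, hence $w(x)=v(x)$.

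The argument is essentially mechanical once Lemma \ref{invfunc} is in hand; the only points requiring any thought are verifying that $(q,p)=1$ really does give invertibility of $q$ in $\Int_{(p)}$ (immediate from the definition of that localization) and that the polynomial $F(y)=(1+y)^q-1$ has integer coefficients, so the invocation of the lemma takes place entirely over $\Int_{(p)}$.
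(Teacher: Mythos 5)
Your proof is correct and follows essentially the same route as the paper: apply Lemma \ref{invfunc} to $F(y)=(1+y)^q-1$, using that $q$ is a unit in $\Int_{(p)}$, and set $v(x)=1+G(x)$. You also spell out the uniqueness step (reducing $w=1+H$ with $w^q=1+x$ back to the uniqueness clause of Lemma \ref{invfunc}), which the paper leaves implicit; this is a fine addition but not a different argument.
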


\begin{proof} Consider the formal power series $F(x)=(1+x)^q-1$.
Then we have $F(0)=0$ and $F'(0)=q$ is invertible in $\Int_{(p)}$. 
By Lemma \ref{invfunc}, there exists a unique formal power series
$G(y)\in \Int_{(p)}[[y]]$ satisfying $G(0)=0$ and $(1+G(y))^q-1=y$.  
If we put $v(x)=1+G(x)$, we see that $v(x)^q=1+x$.  
\end{proof}
We shall denote the formal power series $1+G(x)$ in the proof
above by $(1+x)^{1/q}$. 
It is well known that for a rational number $\alpha$, we have a
formal power series expansion
\begin{equation} \label{fbin}
(1+x)^\alpha = 1+\sum_{i\ge 1} \binom{\alpha}{i} x^i, \nonumber
\end{equation}
where  $\binom{\alpha}{i}=\alpha(\alpha-1)\cdots (\alpha-i+1)/i!$.
When $\alpha=1/q$ where $q$ is an integer prime to $p$, then
from the corollary above, the coefficients $\binom{\alpha}{i}$ 
belong to $\Int_{(p)}$.  
This is also true for general $\alpha\in \Int_{(p)}$.

\begin{prop}\label{genfbin} Let $p$ be a prime,
and $f(x)\in\Int_{(p)}[[x]]_1$.
For a natural number $q$
such that $(p,q)=1$, there exists a unique formal power series
$\varphi(x)\in \Int_{(p)}[[x]]_1$ that  satisfies $(\varphi(x))^q=f(x)$.
\end{prop}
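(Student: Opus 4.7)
The plan is to reduce Proposition~\ref{genfbin} to Corollary~\ref{bino} (equivalently, to Lemma~\ref{invfunc}) by a substitution. Since $f \in \Int_{(p)}[[x]]_1$, I would first write $f(x) = 1 + g(x)$ with $g(x) \in \Int_{(p)}[[x]]$ satisfying $g(0) = 0$. The goal is then to produce a $q$-th root of $1 + g(x)$ of the form $1 + u(x)$ with $u(x) \in \Int_{(p)}[[x]]$ and $u(0) = 0$; this can be thought of as substituting $x \mapsto g(x)$ into the series $(1+x)^{1/q}$ produced by Corollary~\ref{bino}.

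For existence, I would apply Lemma~\ref{invfunc} to the polynomial $F(y) = (1+y)^q - 1 \in \Int_{(p)}[[y]]$, which has $F(0) = 0$ and $F'(0) = q$. Because $(p,q) = 1$, the element $q$ is invertible in $\Int_{(p)}$, so Lemma~\ref{invfunc} yields a unique $G(y) \in \Int_{(p)}[[y]]$ with $G(0) = 0$ and $F(G(y)) = y$. Since $g(x)$ has zero constant term, the composition $G(g(x))$ is a well-defined element of $\Int_{(p)}[[x]]$ with zero constant term, and $\varphi(x) := 1 + G(g(x))$ lies in $\Int_{(p)}[[x]]_1$, with
\[
\varphi(x)^q \;=\; 1 + F(G(g(x))) \;=\; 1 + g(x) \;=\; f(x),
\]
as required.

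For uniqueness, I would argue by induction on coefficients rather than by inverting a composition. Suppose $\psi(x) = 1 + \sum_{n \ge 1} b_n x^n \in \Int_{(p)}[[x]]_1$ satisfies $\psi^q = f$, and write $f(x) = 1 + \sum_{n \ge 1} a_n x^n$. Expanding $(\psi(x))^q$ by the binomial theorem and comparing the coefficient of $x^n$ on both sides yields a relation
\[
q\,b_n + P_n(b_1, \ldots, b_{n-1}) \;=\; a_n,
\]
where $P_n$ is a polynomial in the earlier coefficients with integer coefficients (the term $q\,b_n$ comes solely from the $k=1$ summand in $\binom{q}{k}\psi_{\ge 1}^k$). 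Since $q$ is invertible in $\Int_{(p)}$, each $b_n$ is uniquely determined in $\Int_{(p)}$ by the previous ones, so $\psi = \varphi$.

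There is no serious obstacle here: the entire proposition is essentially Corollary~\ref{bino} composed with a substitution, and the hypothesis $(p,q) = 1$ is used precisely to ensure that $q = F'(0)$ is a unit in $\Int_{(p)}$, which is what Lemma~\ref{invfunc} and the inductive uniqueness argument both require. The only point meriting care is verifying that all substitutions remain inside $\Int_{(p)}[[x]]$, which is automatic because every series being substituted has zero constant term.
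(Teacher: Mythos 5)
Your proof is correct and takes essentially the same route as the paper: existence by substituting $f(x)-1$ into the $q$-th root of $1+x$ furnished by Corollary~\ref{bino} (equivalently Lemma~\ref{invfunc}), and uniqueness by inductively solving for the coefficients using that $q$ is a unit in $\Int_{(p)}$. Your uniqueness argument is in fact spelled out more explicitly than the paper's, which only gestures at the inductive argument.
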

\begin{proof}
From Corollary \ref{bino}, there exists $v(x)\in \Int_{(p)}$
satisfying $(v(x))^q=1+x$. Substituting the variable $x$ by
$f(x)-1$ in $v(x)$ we obtain $\varphi(x)=v(f(x)-1)$. Then
we have $(\varphi(x))^q=1+(f(x)-1)=f(x)$.  
The uniqueness of such $\varphi(x)$ can be shown 
by the inductive argument as in
the proof of Lemma \ref{twth}.
\end{proof}

\begin{coro}\label{fbino} Let $p$ be a prime and let $\alpha=m/q$,
$m, q \in \Int$ with $(p,q)=1$.  
Then for any $f(x)\in\Int_{(p)}[[x]]_1$,
then the following formula holds in $\Int_{(p)}[[x]]_1$:
\[ (f(x))^{m/q}=(f(x)^{1/q})^m=(f(x)^m)^{1/q}. \]
\end{coro}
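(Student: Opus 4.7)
The plan is to reduce both equalities to the uniqueness clause of Proposition \ref{genfbin}. Set $g(x) := f(x)^{1/q}$, meaning the unique element of $\Int_{(p)}[[x]]_1$ whose $q$-th power equals $f(x)$, as furnished by that proposition. The notation $(f(x))^{m/q}$ will then be interpreted simply by either side of the claimed identity once we have shown them equal.

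The first step is to verify that for every integer $m$ (positive, zero, or negative) the element $g(x)^m$ makes sense inside $\Int_{(p)}[[x]]_1$. For $m \ge 0$ this is automatic because $\Int_{(p)}[[x]]_1$ is closed under multiplication. For $m < 0$, one uses the standard fact that any formal power series with constant term $1$ is invertible in $\Int_{(p)}[[x]]$ with inverse again of constant term $1$ — the inverse coefficients are obtained by the usual recursive formula, which involves no denominators beyond those already present, hence stays in $\Int_{(p)}$. The same remark applies to $f(x)^m$.

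The heart of the matter is then a one-line computation in the commutative ring $\Int_{(p)}[[x]]$:
\[
\bigl(g(x)^m\bigr)^q \;=\; g(x)^{mq} \;=\; \bigl(g(x)^q\bigr)^m \;=\; f(x)^m.
\]
Thus $g(x)^m$ is an element of $\Int_{(p)}[[x]]_1$ whose $q$-th power is $f(x)^m$. Applying the uniqueness half of Proposition \ref{genfbin} to the series $f(x)^m \in \Int_{(p)}[[x]]_1$ forces $g(x)^m = (f(x)^m)^{1/q}$, which is precisely the equality $(f(x)^{1/q})^m = (f(x)^m)^{1/q}$. Declaring either (equal) expression to be the meaning of $(f(x))^{m/q}$ completes the proof.

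There is no real obstacle here: the only minor subtlety is the treatment of negative exponents, which requires the remark on invertibility of unit power series, and one must be careful to invoke Proposition \ref{genfbin} separately for $f(x)$ (to produce $g(x)$) and for $f(x)^m$ (to pin down $g(x)^m$ via uniqueness). Otherwise the corollary is just a uniqueness argument dressed up with exponent arithmetic.
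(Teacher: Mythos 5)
Your argument is correct and is exactly the intended one: the paper states this corollary without proof as an immediate consequence of the uniqueness clause of Proposition \ref{genfbin}, and your computation $\bigl(g(x)^m\bigr)^q = f(x)^m$ together with the remark on invertibility of unit power series (needed for negative $m$) fills in that omitted argument properly. No issues.
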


\medskip 
Next we shall introduce the notations 
that will frequently appear in this paper and
explain number theoretic facts 
which will be used in the proofs.  
Let us fix a prime number $p$.  
For any integer $n$ the $p$-order of $n$ is the exponent of
$p$ in the prime factorization of $n$ 
and is denoted by $\nu_p(n)$.  
By convention, we set $\nu_p(0)=\infty$.  
For a rational number $m/n$, where $m$, $n$ $\in\Int$,
we define $\nu_p(m/n)=\nu_p(m)-\nu_p(n)$.  
In the $p$-ary notation of a nonnegative integer
$n=\sum_i n_i p^i$, the sum of digits $\sum_i n_i$ 
is denoted by $\kappa_p(n)$.  

Here we present a fundamental lemma in treating
the $p$-order of the coefficients of powers of a sum.

\begin{lemm}\label{ppw} Let $p$ be a prime and $m$, $n$ 
be non-negative integers.  \par
\noindent{\rm (a)}\quad For variables $x$ and $y$, we have
\[ (x+y)^{p^{m+n}}\equiv (x^{p^m}+y^{p^m})^{p^n} \mod p^{n+1}. \]
{\rm (b)}\quad For variables $x_1, x_2, \ldots, x_r$, we have
\[ (x_1+x_2+\cdots+x_r)^{p^{m+n}}\equiv
(x_1^{p^m}+x_2^{p^m}+\cdots+x_r^{p^m})^{p^n} \mod p^{n+1}. \]
\end{lemm}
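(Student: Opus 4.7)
The plan is to establish both (a) and (b) uniformly by induction on $n$, with (a) being the two-variable case of (b). The key auxiliary fact to isolate first is: if $A, B \in \Int[x_1,\ldots,x_r]$ with $A \equiv B \mod p^k$ for some $k \geq 1$, then $A^p \equiv B^p \mod p^{k+1}$. This is proved by writing $A = B + p^k C$ and expanding $A^p$ by the binomial theorem: the $j$-th cross term $\binom{p}{j} B^{p-j} (p^k C)^j$ has $p$-adic valuation at least $\nu_p(\binom{p}{j}) + kj$, which equals $k+1$ for $j=1$, is $\geq 1 + 2k \geq k+1$ for $2 \leq j \leq p-1$ thanks to $p \mid \binom{p}{j}$, and is $kp \geq k+1$ for $j = p$ (using $k \geq 1$).

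The base case $n = 0$ of (b) is the classical freshman's dream $(x_1 + \cdots + x_r)^{p^m} \equiv x_1^{p^m} + \cdots + x_r^{p^m} \mod p$. I would first establish this for $r = 2$ by induction on $m$: the case $m = 1$ is immediate from $p \mid \binom{p}{j}$ for $0 < j < p$, and the inductive step is obtained by applying the auxiliary fact with $k=1$ to the hypothesis at $m-1$ and then invoking the $m=1$ case. The case of general $r$ then follows by iterating the two-variable case, grouping $x_1 + \cdots + x_{r-1}$ against $x_r$.

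For the inductive step from $n$ to $n+1$, observe that
\[ (x_1 + \cdots + x_r)^{p^{m+n+1}} = \bigl((x_1 + \cdots + x_r)^{p^{m+n}}\bigr)^p. \]
The inductive hypothesis identifies the inner $p^{m+n}$-th power with $(x_1^{p^m} + \cdots + x_r^{p^m})^{p^n}$ modulo $p^{n+1}$; applying the auxiliary fact with $k = n+1$ to raise this congruence to the $p$-th power produces exactly the claim at stage $n+1$. Part (a) comes for free as the $r = 2$ specialization of (b). The only technical point is the auxiliary fact, which is a matter of binomial coefficient valuations; the rest is routine induction.
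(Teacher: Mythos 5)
Your proof is correct and takes essentially the same route as the paper: induction on $n$, where the inductive step raises the level-$n$ congruence to the $p$-th power and controls the binomial cross terms via their $p$-adic valuations, which is exactly the computation the paper performs inline by writing $(x+y)^{p^{m+n}}=(x^{p^m}+y^{p^m})^{p^n}+p^{n+1}L$ and expanding the $p$-th power. The differences are purely organizational: you isolate that step as an auxiliary lemma, prove (b) directly with (a) as the $r=2$ specialization, and spell out the freshman's-dream base case that the paper treats as well known.
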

\begin{proof}
To prove (a) we use induction on $n$.  It is well known that
the assertion is true for $n=0$.  Assume that (a) is true
for $n$.  Then we can write
\[ (x+y)^{p^{m+n}}=(x^{p^m}+y^{p^m})^{p^n}+p^{n+1}L \]
for some $L\in \Int[x,y]$.  Then taking the $p$-th power, 
we have
\begin{align*} (x+y)^{p^{m+n+1}}
&= (x^{p^m}+y^{p^m})^{p^{n+1}}+\sum_{i=1}^p \binom{p}{i}
(x^{p^m}+y^{p^{m}})^{p^{n}(p-i)}(p^{n+1}L)^i \\
&=(x^{p^m}+y^{p^m})^{p^{n+1}}  \mod p^{n+2}.
\end{align*}
This shows that the assertion is also true for $n+1$.
The proof of (b) goes similarly using the induction on $n$ 
and is omitted.
\end{proof}
From this lemma, we have the following formula.
\begin{coro} \label{fppw}Given a formal power series
$f(x)\in \Int_{(p)}[[x]]$.  If $p$ is a prime, we have
\[ \bigl(f(x)\bigr)^{p^{m+n}
}\equiv \biggl(\sum_{i\ge 0} (c_ix^i)^{p^m}\biggr)^{p^n}
\mod p^{n+1}. \]
\end{coro}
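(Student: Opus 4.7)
The plan is to deduce this from the polynomial identity in Lemma \ref{ppw}(b) by passing to the formal power series ring via a coefficient-by-coefficient comparison.

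First I would make the key reduction: for any fixed nonnegative integer $N$, the coefficient of $x^N$ in $(f(x))^{p^{m+n}}$ depends only on $c_0, c_1, \ldots, c_N$, and the same is true for the coefficient of $x^N$ in $\bigl(\sum_{i\ge 0}(c_i x^i)^{p^m}\bigr)^{p^n}$ (the terms $(c_i x^i)^{p^m}$ for $i > N$ have degree $> N p^m \ge N$, and raising to the $p^n$ power cannot reintroduce anything in degree $\le N$ from them). Therefore it suffices, for each $N$, to verify the congruence modulo $p^{n+1}$ after truncating $f(x)$ to the polynomial $f_N(x)=\sum_{i=0}^N c_i x^i$ and keeping only the terms of degree at most $N$.

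Next I would apply Lemma \ref{ppw}(b) with $r=N+1$ in the polynomial ring $\Int[y_0, y_1, \ldots, y_N]$, giving
\[
(y_0+y_1+\cdots+y_N)^{p^{m+n}}\equiv
(y_0^{p^m}+y_1^{p^m}+\cdots+y_N^{p^m})^{p^n}\mod p^{n+1}.
\]
Then substitute $y_i \mapsto c_i x^i$; since this is a ring homomorphism $\Int[y_0,\ldots,y_N]\to \Int_{(p)}[x]$, any congruence modulo $p^{n+1}$ (which simply asserts that the difference lies in $p^{n+1}\Int[y_0,\ldots,y_N]$) is preserved under substitution. Because $(c_i x^i)^{p^m}=c_i^{p^m} x^{i p^m}$, the right-hand side becomes exactly $\bigl(\sum_{i=0}^N (c_i x^i)^{p^m}\bigr)^{p^n}$, and the left-hand side is $(f_N(x))^{p^{m+n}}$. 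Letting $N$ vary yields the congruence in $\Int_{(p)}[[x]]$.

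I do not anticipate any serious obstacle; the argument is a routine transfer from a finite polynomial identity to a formal power series one. The only point to be careful about is the first paragraph's observation that every coefficient of either side is determined by finitely many $c_i$, which legitimizes the truncation and makes the congruence modulo $p^{n+1}$ literally a congruence between elements of $\Int_{(p)}$ in each degree.
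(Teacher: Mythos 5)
Your argument is correct and is exactly the deduction the paper intends: the corollary is stated as an immediate consequence of Lemma \ref{ppw}(b) (the paper gives no proof beyond ``From this lemma, we have the following formula''), and your truncation-plus-substitution $y_i\mapsto c_i x^i$ fills in precisely that step. The only detail worth keeping explicit is the one you already flagged, namely that each coefficient of both sides depends on only finitely many $c_i$, which justifies passing from the polynomial congruence to the power series one.
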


\begin{coro}\label{ppwfps}
Let $p$ be a prime and a formal power series 
$f(x)=\sum_{i\ge0}c_i x^i$,  let us express its
$l$-th power as
\[ \bigl(f(x)\bigr)^l = \sum_{i\ge 0} c_i^{(l)} x^i . \]
If all the coefficents of $f(x)$ are in $\Int_{(p)}$, then we have 
\[ \nu_p(c_i^{(l)})\ge \nu_p(l)-\nu_p(i).\]
\end{coro}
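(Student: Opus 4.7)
The plan is to exploit the formal derivative identity $(f^l)' = l f^{l-1} f'$, which immediately extracts a factor of $l/i$ from each coefficient $c_i^{(l)}$ for $i \ge 1$. I first dispose of the boundary cases: if $i = 0$, then $\nu_p(0) = \infty$ by the stated convention, so the asserted lower bound $\nu_p(l) - \nu_p(i)$ equals $-\infty$ and the inequality holds vacuously; if $l = 0$, then $c_i^{(l)} = 0$ for $i \ge 1$ and the inequality is again trivial.

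For the main case $i \ge 1$ and $l \ge 1$, I differentiate $f(x)^l = \sum_{k \ge 0} c_k^{(l)} x^k$ term by term to obtain
\[
\sum_{k \ge 1} k\, c_k^{(l)} x^{k-1} \;=\; l\, f(x)^{l-1}\, f'(x).
\]
Since $l - 1 \ge 0$ is a nonnegative integer and both $f$ and its formal derivative $f'$ lie in $\Int_{(p)}[[x]]$, the product $f^{l-1} f'$ also lies in $\Int_{(p)}[[x]]$. Let $\alpha \in \Int_{(p)}$ denote its coefficient of $x^{i-1}$. Comparing coefficients of $x^{i-1}$ gives $i\, c_i^{(l)} = l\, \alpha$, i.e., $c_i^{(l)} = (l/i)\, \alpha$, and taking $p$-orders yields
\[
\nu_p(c_i^{(l)}) \;=\; \nu_p(l) - \nu_p(i) + \nu_p(\alpha) \;\ge\; \nu_p(l) - \nu_p(i),
\]
since $\nu_p(\alpha) \ge 0$.

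I do not anticipate any real obstacle here; the argument reduces to a single application of the standard logarithmic-derivative identity, with the only subtlety being the bookkeeping at $i = 0$. An alternative route, more in the spirit of the preceding machinery, would be to factor $l = p^s l'$ with $(l', p) = 1$, write $f^l = (f^{l'})^{p^s}$ (so the base $f^{l'}$ still lies in $\Int_{(p)}[[x]]$), and then, for $s > \nu_p(i)$, invoke Corollary \ref{fppw} with $m = \nu_p(i) + 1$ and $n = s - \nu_p(i) - 1$: the right-hand side of that corollary is a power series in $x^{p^m}$, so its coefficient of $x^i$ vanishes, and the required congruence modulo $p^{n+1} = p^{s - \nu_p(i)}$ follows directly. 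I prefer the derivative approach because it avoids the reduction to prime-power $l$ and gives the sharp bound in one step.
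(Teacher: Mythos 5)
Your proof is correct, but your primary argument takes a genuinely different route from the paper's. The paper proves the statement as an actual corollary of Corollary \ref{fppw}: after disposing of the trivial case $\nu_p(l)\le\nu_p(i)$, it sets $m=\nu_p(i)+1$ and $n=\nu_p(l)-m$, writes $l=p^{m+n}q$ with $(p,q)=1$, and observes that $\bigl(f(x)\bigr)^l$ is congruent modulo $p^{n+1}$ to a power series in $x^{p^m}$, so that $c_i^{(l)}\equiv 0 \bmod p^{n+1}$ because $p^m\nmid i$; this is essentially the alternative you sketch in your closing paragraph. Your main argument via the logarithmic-derivative identity $(f^l)'=l\,f^{l-1}f'$, which gives $i\,c_i^{(l)}=l\alpha$ with $\alpha\in\Int_{(p)}$, is shorter, avoids the case split and the reduction to prime-power exponents entirely, and in fact yields the slightly stronger fact that $i\,c_i^{(l)}/l$ lies in $\Int_{(p)}$ (and in $\Int$ uniformly in $p$ when $f$ has integer coefficients). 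What the paper's route buys is continuity with the Frobenius-congruence machinery of Lemma \ref{ppw} and Corollary \ref{fppw} that it has already built and reuses elsewhere; your route renders that machinery unnecessary for this particular bound. Your handling of the degenerate cases $i=0$ and $l=0$ via the convention $\nu_p(0)=\infty$ is also fine.
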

\begin{proof} Let us fix $i$.  
The assertion trivially holds
if $\nu_p(l)\le\nu_p(i)$. 
So we may assume that
$\nu_p(l)\ge \nu_p(i)+1$. 
We set $m=\nu_p(i)+1$ and $n=\nu_p(l)-m$.  Then since
$m+n=\nu_p(l)$, we can write $l=p^{m+n}q$ for some $q\in\Int_+$ 
with $(p,q)=1$. 
From Corollary \ref{fppw}, we have
\[ \bigl(f(x)\bigr)^l=\bigl(f(x)\bigr)^{p^{m+n}q}
\equiv \biggl(\sum_{j\ge0}\bigl(c_jx^j\bigr)^{p^m}\biggr)^{p^nq}
\mod p^{n+1}. \]
This shows that since $i$ is not divisible by $p^m$, 
$c_i^{(l)} \equiv 0 \mod p^{n+1}$ holds.  That is
\[ \nu_p(c_i^{(l)})\ge n+1 = \nu_p(l)-m+1=\nu_p(l)-\nu_p(i).  \]
\end{proof}

\medskip

In this paper
we are interested in the case where $p=2$ and
we shall only consider the case $p=2$ from now on.

\begin{lemm}\label{twon}
Let $n, k$ be integers with $0\le k\le n$.
Then we have the following.\par
{\rm (a)}\qquad $\nu_2(n!)=n-\kappa_2(n)$.\par
{\rm (b)}\qquad $\nu_2(\binom{n}{k})=
\kappa_2(k)+\kappa_2(n-k)-\kappa_2(n)$.\par
{\rm (c)}\qquad Let $n=\sum_i n_i 2^i$,
\ $k=\sum_i k_i 2^i$ be the
binary notations of $n$ and $k$. 
Then the binomial coefficient $\binom{n}{k}$ is even
if and only if there exists $i$ such that $n_i<k_i$.
\end{lemm}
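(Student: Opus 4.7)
The plan is to treat (a), (b), (c) in that order. Part (a) is the classical Legendre-type formula for $\nu_2(n!)$ and will be the computational core; (b) will follow immediately from (a) by additivity of $\nu_2$; and (c) is the Kummer criterion for $\binom{n}{k}$ to be odd, which I would deduce from (b) via a binary carry analysis.

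For (a), I would start from the counting identity $\nu_2(n!) = \sum_{i \ge 1} \lfloor n/2^i \rfloor$, obtained by counting, for each $i \ge 1$, the multiples of $2^i$ in $\{1, 2, \ldots, n\}$. Writing $n = \sum_j n_j 2^j$ in binary gives $\lfloor n/2^i \rfloor = \sum_{j \ge i} n_j 2^{j-i}$, and interchanging the order of summation produces
\[ \nu_2(n!) = \sum_{j \ge 1} n_j \sum_{i=1}^{j} 2^{j-i} = \sum_{j \ge 1} n_j (2^j - 1) = n - \kappa_2(n). \]
Part (b) is then immediate from $\binom{n}{k} = n!/(k!(n-k)!)$: apply (a) to each of the three factorials and use additivity of $\nu_2$ to obtain $\nu_2(\binom{n}{k}) = (n-\kappa_2(n)) - (k-\kappa_2(k)) - ((n-k)-\kappa_2(n-k)) = \kappa_2(k) + \kappa_2(n-k) - \kappa_2(n)$.

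For (c), using (b), it suffices to show that $\kappa_2(k) + \kappa_2(n-k) > \kappa_2(n)$ if and only if there exists $i$ with $k_i > n_i$. I would analyze the binary addition $k + (n-k) = n$ column by column. If $k_i \le n_i$ for every $i$, then $m_i := n_i - k_i$ gives the binary expansion of $m := n-k$ (no carries arise, since $k_i + m_i \le 1$ in each column), and summing the digits yields $\kappa_2(k) + \kappa_2(n-k) = \sum_i n_i = \kappa_2(n)$. Conversely, if the addition has no carries, then each column satisfies $n_i = k_i + (n-k)_i \ge k_i$, so $k_i \le n_i$ for all $i$; contrapositively, if some $k_i > n_i$ then at least one carry must occur. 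The main obstacle is making this carry count into a strict inequality in the presence of cascading carries: to handle it cleanly I would use the per-column identity $a_i + b_i - s_i = 2c_{i+1} - c_i$ (where $s_i$ is the sum digit and $c_i$ is the carry into column $i$, with $c_0 = 0$), which upon summing telescopes to $\kappa_2(a) + \kappa_2(b) - \kappa_2(a+b) = \sum_{i\ge 1} c_i$, i.e.\ exactly the total number of carries—strictly positive as soon as a single carry occurs.
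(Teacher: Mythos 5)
Your proof is correct, and it reaches the same three conclusions by a partly different route. For (a), the paper avoids Legendre's formula entirely: it observes that both $\nu_2(n!)$ and $n-\kappa_2(n)$ satisfy the recursion $q_0=0$, $q_n=[n/2]+q_{[n/2]}$, and concludes by uniqueness of the solution; your derivation via $\nu_2(n!)=\sum_{i\ge1}\lfloor n/2^i\rfloor$ and a sum interchange is the more standard computation and is equally valid --- the recursion argument is slicker but hides the counting, while yours makes it explicit. Part (b) is identical in both. For (c), the paper gives only a terse one-directional argument (a column with $n_i<k_i$ forces a carry, and a carry forces $\kappa_2(k)+\kappa_2(n-k)>\kappa_2(n)$), leaving the converse implicit; your version is more complete: you establish both directions, and your telescoping identity $\kappa_2(k)+\kappa_2(n-k)-\kappa_2(n)=\sum_{i\ge1}c_i$ (the total carry count) is exactly the clean way to handle cascading carries that the paper glosses over, and in fact recovers the full Kummer statement, of which (c) is the parity case. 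No gaps; if anything your write-up is the more rigorous of the two.
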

\begin{proof}
It is not difficult to see that the two 
sequences $\{q_n^{(i)}\}, \ (i=1,2)$ defined by
$q_n^{(1)}=\nu_2(n!)$ and by $q_n^{(2)}=n-\kappa_2(n)$ 
both satisfy the
same inductive formula
\[ q_0^{(i)}=0 , \qquad q_n^{(i)}=[n/2]+q_{[n/2]}^{(i)}, \]
where $[t]$ denotes the largest integer not exceeding $t$.  
This formula uniquely determines the sequences $\{q_n^{(i)}\}$ 
and this fact proves (a).
(b) follows immediately from (a).
To show (c), if there exists a column $i$ 
such that $n_i<k_i$ then
in the addition process of $k$ and $n-k$ in binary form,
there exists a column where the digit addition carries 1 
to the next column.  If such a column exists, 
 there arises a decrease of sum of digits as in
\[  \kappa_2(k)+\kappa_2(n-k)>\kappa_2(n) \]
and this proves our assertion.
\end{proof}

\begin{lemm}\label{twtw}
Let $i$,  $n$ and $m$ be natural numbers 
and assume that $n$ and $m$ are odd.  
Then we have the following.\par
{\rm (a)}\quad $ \nu_2(n^m+1)=\nu_2(n+1)\quad \mbox{and}\quad 
\nu_2(n^m-1)=\nu_2(n-1). $\par
{\rm (b)}\quad $\nu_2(n^{2i}-1)=\nu_2(n^2-1)+\nu_2(i).$ \par
{\rm (c)}\quad $\nu_2(n^{i}-(-1)^i)=
\begin{cases}
 \nu_2(n+1), & \mbox{if}\ i\ \mbox{is\ odd} \\
\nu_2(n^2-1)+\nu_2(i)-1, & \mbox{if}\  i\  \mbox{is even.}
\end{cases}
$
\end{lemm}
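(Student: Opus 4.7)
The plan is to establish (a), (b), (c) in order, drawing on two elementary ingredients: the geometric-series factorizations of $n^m\pm1$, and the fact that $n^2\equiv 1\pmod 8$ for every odd integer $n$.

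For (a) I would use
\[ n^m+1 = (n+1)\bigl(n^{m-1}-n^{m-2}+\cdots-n+1\bigr),\qquad n^m-1=(n-1)\bigl(n^{m-1}+n^{m-2}+\cdots+n+1\bigr). \]
In each case the second factor is a sum of $m$ terms, every one of which is odd (since $n$ is odd); as $m$ is odd, this sum is itself odd, so the second factor contributes nothing to $\nu_2$ and the claim follows.

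For (b) I would write $i=2^{s}t$ with $t$ odd. Applying (a) to the odd exponent $t$ with base $n^{2^{s+1}}$,
\[ \nu_2(n^{2i}-1)=\nu_2\bigl((n^{2^{s+1}})^{t}-1\bigr)=\nu_2(n^{2^{s+1}}-1), \]
so the problem reduces to showing $\nu_2(n^{2^{s+1}}-1)=\nu_2(n^2-1)+s$. I would prove this by induction on $s$ via the factorization $n^{2^{s+1}}-1=(n^{2^{s}}-1)(n^{2^{s}}+1)$: for $s\ge 1$ the congruence $n^2\equiv 1\pmod 8$ gives $n^{2^{s}}\equiv 1\pmod 8$, hence $\nu_2(n^{2^{s}}+1)=1$ exactly, which supplies the inductive increment. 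The base case $s=0$ is tautological.

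Part (c) is then immediate. If $i$ is odd, $n^{i}-(-1)^{i}=n^{i}+1$, and (a) delivers $\nu_2(n+1)$. If $i$ is even, write $i=2j$ and apply (b) with $j$ in place of $i$:
\[ \nu_2(n^{i}-1)=\nu_2(n^{2j}-1)=\nu_2(n^2-1)+\nu_2(j)=\nu_2(n^2-1)+\nu_2(i)-1. \]
I do not anticipate a real obstacle; the entire lemma is bookkeeping around the two factorizations, with the only quantitative input being the one-line check $n^2\equiv 1\pmod 8$ for odd $n$. The most delicate moment is ensuring that the $+1$ factor in (b) contributes exactly $1$ (not more) to the valuation, which is precisely where $n^{2^{s}}+1\equiv 2\pmod 8$ is used.
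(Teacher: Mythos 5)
Your proof is correct and follows essentially the same route as the paper: the geometric-series factorizations for (a), reduction to $i=2^e$ and the telescoping factorization $n^{2^{e+1}}-1=(n^2-1)(n^2+1)\cdots(n^{2^e}+1)$ with $\nu_2(n^{2^r}+1)=1$ for (b), and then (c) as an immediate consequence. You supply slightly more detail than the paper (the explicit $n^2\equiv 1\pmod 8$ justification and the derivation of (c), which the paper leaves implicit), but the argument is the same.
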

\begin{proof} (a) follows immediately from the factorizations
\begin{align*}
& n^m+1=(n+1)(n^{m-1}-n^{m-2}+\cdots -n+1)\\
& n^m-1 = (n-1)(n^{m-1}+n^{m-2}+\cdots +n+1). 
\end{align*}
To show (b), in view of (a) we may assume, 
without loss of generality, that
$i=2^e$.  Then from the factorization
\[ n^{2i}-1=(n^2-1)(n^2+1)(n^{2^2}+1)\cdots (n^{2^e}+1), \]
and from the fact that $\nu_2(n^{2^r}+1)=1$ for $r\ge1$, 
we have the conclusion by (a) and (b). 
\end{proof}

\begin{lemm}\label{cnton}
 Let $i_1, i_2, \ldots, i_s$ be non-negative
integers. \par
\noindent {\rm (a)} \ If $i_1\ge 1$, then
\[ \nu_2(i_1)+\kappa_2(i_1)+\kappa_2(i_2) \ge \nu_2(i_1+i_2)+1.\]
\noindent {\rm (b)}\ If $i_1\ge 1$, then
\[ \nu_2(i_1)+\kappa_2(i_1)+\kappa_2(i_2)+\cdots
+\kappa_2(i_s)\ge \nu_2(i_1+\cdots+i_s)+1. \]
\end{lemm}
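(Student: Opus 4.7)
The plan is to prove (a) by cases on the relation between $\nu_2(i_1)$ and $\nu_2(i_2)$, and then derive (b) from (a) by induction on $s$ together with the elementary subadditivity $\kappa_2(a+b)\le \kappa_2(a)+\kappa_2(b)$.

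Set $n=\nu_2(i_1)$. First I would dispose of the two easy cases. If $\nu_2(i_2)>n$ (which covers $i_2=0$ under the convention $\nu_2(0)=\infty$), then $\nu_2(i_1+i_2)=n$ and (a) reduces to $\kappa_2(i_1)+\kappa_2(i_2)\ge 1$, immediate from $i_1\ge 1$. If $\nu_2(i_2)<n$, then $\nu_2(i_1+i_2)=\nu_2(i_2)\le n-1$, so the single term $\nu_2(i_1)$ already exceeds $\nu_2(i_1+i_2)+1$, and adding the (non-negative) $\kappa_2$ terms only helps.

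The substantive case is $\nu_2(i_1)=\nu_2(i_2)=n$, where necessarily $i_2>0$. Write $i_1=2^n m$, $i_2=2^n s$ with $m,s$ odd and $m+s=2^a r$ with $r$ odd and $a\ge 1$; then $\nu_2(i_1+i_2)=n+a$ and the required inequality reduces to $\kappa_2(m)+\kappa_2(s)\ge a+1$. The key observation is that a single carry in the binary addition of $m$ and $s$ reduces the total digit-sum by exactly $1$, so $\kappa_2(m)+\kappa_2(s)=\kappa_2(m+s)+c(m,s)$, where $c(m,s)$ counts the carries. Since $m$ and $s$ are both odd and the lowest $a$ binary digits of $m+s$ vanish, the addition forces successive carries in columns $0,1,\ldots,a-1$, so $c(m,s)\ge a$; combined with $\kappa_2(m+s)=\kappa_2(r)\ge 1$, this yields the claim.

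For (b) I would induct on $s$, the base case $s=2$ being (a). Setting $T=i_2+\cdots+i_s$ and applying (a) to the pair $(i_1,T)$ gives $\nu_2(i_1)+\kappa_2(i_1)+\kappa_2(T)\ge \nu_2(i_1+\cdots+i_s)+1$. Iterating the subadditivity $\kappa_2(a+b)\le \kappa_2(a)+\kappa_2(b)$ (which is the non-negativity of carry counts, equivalently a consequence of Lemma \ref{twon}(b) applied to $\binom{a+b}{a}$) yields $\kappa_2(T)\le \kappa_2(i_2)+\cdots+\kappa_2(i_s)$, and substitution completes the argument. The main obstacle is the equal-valuation case of (a); everything hinges on the carry-count showing that adding two odd numbers whose sum has $a$ trailing binary zeros requires at least $a$ carries.
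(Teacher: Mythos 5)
Your proposal is correct and follows essentially the same route as the paper: reduce part (a) to the case of two odd summands by comparing $2$-adic valuations (the paper does this by parity cases and factoring out $2^{\min}$), handle that core case by showing the binary addition loses at least $\nu_2(i_1+i_2)$ units of digit-sum (your carry count is just a repackaging of the paper's observation that $s_0=t_0=1$ and $s_j+t_j=1$ in the carrying columns), and deduce (b) from (a) via subadditivity of $\kappa_2$. No gaps.
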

\begin{proof} The proof of (a) is divided into several
cases.\par
\noindent Case : $i_1+i_2$ odd.  Then since $\kappa_2(i_1)\ge 1$
and $\nu_2(i_1+i_2)=0$, we get the assertion.\par
\noindent Case: both $i_1$ and $i_2$ odd.  Express $i_1$ and $i_2$ 
in the binary notations:
\[ i_1=\sum_{j\ge 0}s_j2^j,\ \ i_2=\sum_{j\ge 0} t_j2^j , \]
where $s_j, t_j = 0,1$.
Let $d=\nu_2(i_1+i_2)$ then we have $s_0=t_0=1$, and for
each $j$ with $1\le j \le d-1$, $s_j+t_j=1$.  From this
we have
\[ \nu_2(i_1)+\kappa_2(i_1)+\kappa_2(i_2)\ge d+1=\nu_2(i_1+i_2)+1. \]
\noindent Case : both $i_1$ and $i_2$ even.
Let $t=\min(\nu_2(i_1), \nu_2(i_2))$. Then we can write
$i_1=2^t j_1$, $i_2=2^t j_2$ and 
$j_1$ or $j_2$ is odd.  Therefore we have
\[ \nu_2(j_1)+\kappa_2(j_1)+\kappa_2(j_2)\ge \nu_2(j_1+j_2)+1 . \]
Adding $t$ to both sides, we have
\[ \nu_2(i_1)+\kappa_2(i_1)+\kappa_2(i_2)\ge \nu_2(i_1+i_2)+1 .\]
This completes the proof of (a).\par
(b) follows immediately using (a) as follows:
\[ \nu_2(i_1)+ \kappa_2(i_1)+\cdots+\kappa_2(i_s)
\ge \nu_2(i_1)+\kappa_2(i_1)+\kappa_2(i_2+\cdots+i_s)
\ge \nu_2(i_1+\cdots+i_s)+1.\]
\end{proof}

Now we shall present definitions, notations and basic facts
about Bernoulli numbers and Hirzebruch power series.
Recall that the Bernoulli numbers $\beta_n$ 
are defined by 
\begin{equation*}
\frac{x}{e^x-1}=\sum_{n\ge 0} \frac{\beta_n}{n!}x^n.
\end{equation*}
We know that $\beta_0=1$, $\beta_1=-1/2$ and 
$\beta_{2i+1}=0$ if $i\ge 1$.  For $i\ge 1$, we set
$B_i=(-1)^{i+1}\beta_{2i+1}$.  These numbers $B_i$ are
are positive and also called Bernoulli numbers. 
By an easy calculation we obtain Hirzebruch power series
\[ h(x)=\frac{x}{\tanh x}= 1+\sum_{i\ge 1}
\frac{(-1)^{i+1}2^{2i}B_i}{(2i)!}x^{2i}.  \]
To simplify our notation, we put
$ a_i=(h(x))_{2i}=(-1)^{i+1}2^{2i}B_i/(2i)!$.  Then
$h(x)=\sum_{i\ge 0} a_i x^{2i}$. 
The 2-orders of these coefficients $a_i$ are given
by the following lemma which immediately implies
that $h(x)$ is in $\Int_{(2)}[[x]]$.

\begin{lemm}\label{nyua} $\nu_2(a_i)=\kappa_2(i)-1$  for all $i$ .
\end{lemm}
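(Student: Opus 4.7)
The plan is to compute the 2-adic valuation of each factor in the expression $a_i = (-1)^{i+1} 2^{2i} B_i / (2i)!$ separately, so the only real input beyond Section~2 is a classical fact about Bernoulli numbers.

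First, applying Lemma \ref{twon}(a), and noting that the binary expansion of $2i$ is that of $i$ shifted by one bit (so $\kappa_2(2i) = \kappa_2(i)$), I get
\[
\nu_2((2i)!) = 2i - \kappa_2(2i) = 2i - \kappa_2(i).
\]
Consequently $\nu_2\bigl(2^{2i}/(2i)!\bigr) = 2i - (2i - \kappa_2(i)) = \kappa_2(i)$, and the sign does not affect the valuation.

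The remaining task is to establish that $\nu_2(B_i) = -1$ for every $i \geq 1$. This is precisely the 2-adic content of the classical von Staudt--Clausen theorem: for $i \geq 1$,
\[
\beta_{2i} + \sum_{\substack{p\ \text{prime} \\ (p-1)\,\mid\, 2i}} \frac{1}{p} \in \Int.
\]
Since the prime $p = 2$ always satisfies $(p-1) = 1 \mid 2i$, exactly one summand $1/2$ appears; all remaining $1/p$ with odd prime $p$ are 2-adic integers. Hence $\nu_2(\beta_{2i}) = -1$, and therefore $\nu_2(B_i) = -1$. Combining, $\nu_2(a_i) = \nu_2(2^{2i}) - \nu_2((2i)!) + \nu_2(B_i) = 2i - (2i - \kappa_2(i)) + (-1) = \kappa_2(i) - 1$, as claimed.

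The only substantive obstacle is the invocation of von Staudt--Clausen; the rest is bookkeeping with the tools of Section~2. A self-contained 2-adic derivation of $\nu_2(B_i) = -1$ is possible via the recursion $h(x)\tanh(x) = x$ together with an induction on $i$ that tracks the 2-adic denominators, but this is considerably more delicate and is unnecessary here since the classical theorem can simply be cited.
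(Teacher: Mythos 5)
Your proof is correct, and it is precisely the route the paper mentions in its first sentence and then deliberately declines to take: the authors write that the claim "can be proved using the theorem of Clausen--von Staudt $\nu_2(B_i)=-1$" but opt instead for a self-contained elementary argument, multiplying $h(x)$ by $\sinh 2x$ to get $h(x)\sinh 2x = x(\cosh 2x+1)$, differentiating $2n+1$ times, and running an induction showing $u_j = h^{(2j)}(0)/2^{2j-1}$ is a unit in $\Int_{(2)}$ (indeed $\equiv 1$ or $3 \bmod 4$), from which $\nu_2(h^{(2i)}(0)) = 2i-1$ and the same factorial bookkeeping you perform finishes the job. Your bookkeeping is accurate: $\nu_2((2i)!) = 2i - \kappa_2(i)$ via Lemma \ref{twon}(a) and $\kappa_2(2i)=\kappa_2(i)$, and the 2-part of von Staudt--Clausen does give $\nu_2(\beta_{2i})=-1$ since $p=2$ contributes exactly one $1/2$ and all other contributions are 2-adic integers. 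What the paper's elementary detour buys, beyond self-containedness, becomes clear at Lemma \ref{nyub}: the identical derivative-and-induction technique applied to $(2\cosh 2x+1)g(x)=(\cosh 2x-1)/2$ yields $\nu_2(b_i)=\kappa_2(i)-1$ for the coefficients of $g(x)=\tanh^2 x/(3+\tanh^2 x)$, a fact the authors explicitly note "does not follow immediately from the theorem of Clausen--von Staudt." So your citation-based argument is a perfectly valid and shorter proof of this particular lemma, but the machinery the paper builds here is reused where your approach would not extend.
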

\begin{proof} You can prove the assertion by using 
the theorem of Clausen-von Staudt  $\nu_2(B_i)=-1$,
 (see \cite{haw}).  However to keep our exposition self-contained,
we shall present here an alternative elementary proof.
By multiplying $h(x)$ by $\sinh 2x$, we have
\[ h(x)\sinh 2x = x(\cosh 2x +1). \]
When we take the $(2n+1)$-th derivative ($n\ge1$) using the
general Leibniz rule, we get
\[ \sum_{i=0}^{2n+1}\binom{2n+1}{i}h^{(i)}(x) (\sinh 2x)^{(2n+1-i)}
= x 2^{2n+1}\sinh 2x +(2n+1) 2^{2n}\cosh 2x. \]
Substituting $x=0$ we have
\[ \sum_{j=0}^n \binom{2n+1}{2j}h^{(2j)}(0) 2^{2n+1-2j}
= (2n+1)2^{2n}. \]
Division by $2^{2n}$ gives
\[ \sum_{j=0}^n \binom{2n+1}{2j}\frac{h^{(2j)}(0)}{2^{2j-1}}
= 2n+1.
\]
We put $u_j=h^{(2j)}(0)/(2^{2j-1})$ and since $h(0)=1$ we have
\begin{equation}\label{hizind}
 \sum_{j=1}^n \binom{2n+1}{2j}u_j = 2n-1. 
\end{equation}
When $n=1$, we have $u_1=1/3$  and $u_1\equiv 3 \mod 4$ in
$\Int_{(2)}$.  We will show that $u_j\in\Int_{(2)}$ and
$u_j \equiv 1 \mod 4$ for $j\ge 2$ by induction on $j$.
Suppose that $u_j\in \Int_{(2)}$ and $u_j\equiv 1\mod4$
for $2\le j\le n-1$ \ $n\ge 3$.
We know from the binomial expansion formula that
\[ \sum_{j=1}^{n}\binom{2n+1}{2j}=2^{2n}-1. \]
Subtracting this from (\ref{hizind}), we have
\[ \sum_{j=1}^n\binom{2n+1}{2j}(u_j-1)=2n-2^{2n}\equiv 2n \mod 4. \]
On the other hand, from the inductive assumption we have
\begin{align*}
\sum_{j=1}^n \binom{2n+1}{2j}(u_j-1)&\equiv
2\binom{2n+1}{2}+(2n+1)(u_n-1) \mod 4  \\
& = 2n(2n+1)+(2n+1)(u_n-1).
\end{align*}
Thus we have 
\[ 2n(2n+1)+(2n+1)(u_n-1)\equiv 2n \mod 4 .\]
Hence $(2n+1)(u_n-1)\equiv 0 \mod 4$.  Therefore
we have $u_n\equiv 1 \mod 4$.  From this we
have $\nu_2(h^{(2j)}(0))=2j-1$.   Finally we have
\[ \nu_2(a_i)=\nu_2(h^{(2i)}(0))-\nu_2((2i)!)
= (2i-1)-(2i-\kappa_2(2i))=\kappa_2(i)-1. \]
\end{proof}

We shall define another formal power series
\[ g(x)=\frac18\left(\frac{h(3x)}{h(x)}-1\right)
=\frac18\left(\frac{3\tanh x}{\tanh 3x}-1\right)
=\frac{\tanh^2 x}{3+\tanh^2 x} . \]
We shall simply express this formal power series as
\[
g(x)=\sum_{i\ge1}b_i x^{2i} . \]

Since all the coefficients of the formal power series
of
\[ \tanh x =\sum_{i\ge 1}
\frac{(-1)^{i+1}2^{2i}(2^{2i}-1)B_i}{(2i)!}x^{2i-1} \]
belong to $\Int_{(2)}$, the coefficients $b_i$ of $g(x)$ 
all belong to $\Int_{(2)}$.  About their $2$-orders we have
the following lemma.  Remark that this fact does not
follow immediately from the theorem of Clausen-von Staudt.
\begin{lemm}\label{nyub} $\nu_2(b_i)=\kappa_2(i)-1$ for all $i\ge 1$.
\end{lemm}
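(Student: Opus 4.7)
The plan is to derive a compact multiplicative identity relating $g(x)$ to $\cosh(2x)$, in the spirit of the identity $h(x)\sinh 2x = x(\cosh 2x + 1)$ used in the proof of Lemma~\ref{nyua}. Starting from the given formula $g(x)=\tanh^2 x/(3+\tanh^2 x)$, or equivalently $g(x)=\sinh^2 x/(3+4\sinh^2 x)$, I would first solve for $\sinh^2 x = 3g(x)/(1-4g(x))$ and then substitute into the double-angle formula $\cosh 2x = 1 + 2\sinh^2 x$ to obtain the key identity
\[
\cosh(2x)\cdot(1-4g(x)) = 1 + 2g(x).
\]

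Extracting the coefficient of $x^{2n}$ from both sides (for $n\ge 1$), using $\cosh 2x = \sum_{j\ge 0} 4^j x^{2j}/(2j)!$, yields the recursion
\[
6\,b_n = \frac{4^n}{(2n)!} - 4\sum_{i=1}^{n-1} \frac{4^{n-i}}{(2(n-i))!}\,b_i.
\]
The base case $n=1$ gives $6b_1=2$, so $b_1=1/3$ and $\nu_2(b_1)=0=\kappa_2(1)-1$. For the inductive step, assume $\nu_2(b_i)=\kappa_2(i)-1$ for $1\le i < n$. By Lemma~\ref{twon}(a) one computes $\nu_2(4^n/(2n)!) = 2n - (2n-\kappa_2(2n)) = \kappa_2(n)$. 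Each summand satisfies
\[
\nu_2\!\left(4\cdot\frac{4^{n-i}}{(2(n-i))!}\,b_i\right) = \kappa_2(n-i) + \kappa_2(i) + 1 \ \ge\ \kappa_2(n)+1,
\]
the inequality being Lemma~\ref{twon}(b). So the whole sum has $\nu_2$ strictly greater than $\kappa_2(n)$, while the first term has $\nu_2=\kappa_2(n)$ exactly; hence the right-hand side has $\nu_2=\kappa_2(n)$, and so $\nu_2(6b_n)=\kappa_2(n)$, i.e., $\nu_2(b_n)=\kappa_2(n)-1$.

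The main obstacle is discovering the clean multiplicative identity $\cosh(2x)(1-4g(x)) = 1+2g(x)$; once it is in hand, the $2$-adic induction is essentially forced by the classical formula $\nu_2((2n)!) = 2n - \kappa_2(2n)$ together with the subadditivity of $\kappa_2$ (i.e., $\kappa_2(i)+\kappa_2(n-i)\ge \kappa_2(n)$, which is Lemma~\ref{twon}(b) combined with $\nu_2(\binom{n}{i})\ge 0$). Notably, this avoids any appeal to the Clausen--von Staudt theorem, paralleling the self-contained treatment given for Lemma~\ref{nyua}.
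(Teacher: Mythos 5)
Your proposal is correct, and it is essentially the paper's own argument: your identity $\cosh(2x)(1-4g(x))=1+2g(x)$ is exactly the paper's relation $(2\cosh 2x+1)g(x)=(\cosh 2x-1)/2$ rearranged, and both proofs then run the same induction on the resulting recursion for the coefficients. The only (harmless) difference is that you extract Taylor coefficients directly and invoke the subadditivity $\kappa_2(i)+\kappa_2(n-i)\ge\kappa_2(n)$ from Lemma~\ref{twon}(b), whereas the paper normalizes the derivatives to $u_j=g^{(2j)}(0)/2^{2j-1}$ and proves the slightly sharper congruence $u_j\equiv 3\bmod 4$.
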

\begin{proof}
The proof is done using a similar argument of the 
proof of the previous lemma.  

We have
\[ g(x)=\frac{\tanh^2 x}{3+\tanh^2 x}
=\frac{\sinh^2 x}{3\cosh^2 x+\sinh^2 x}
=\frac{\cosh 2x-1}{4\cosh 2x + 2}, \]
and \[ (2\cosh 2x+1)g(x)=(\cosh 2x-1)/2 . \]
Taking the $2n$-th derivative ($n\ge 1$) of both hands, we have
\[
g^{(2n)}(x)\bigl(2\cosh 2x+1\bigr)+
\sum_{i=1}^{2n}\binom{2n}{i}g^{(2n-i)}(x)
\bigl(2\cosh 2x)^{(i)}= 2^{2n-1}\cosh 2x .\]
Substituting $x=0$ in this expression, we have
\[ 3 g^{(2n)}(0)+\sum_{j=1}^n \binom{2n}{2j}2^{2j+1}g^{(2n-2j)}(0)=2^{2n-1}. \] 
Define $u_j=g^{(2j)}(0)/2^{2j-1}$, then we have
\begin{equation}\label{unb}
3u_n+2 \sum_{j=1}^{n-1} \binom{2n}{2j}u_{n-j}=1 .
\end{equation}
By putting $n=1$ to (\ref{unb}), we have
$u_j=1/3\equiv 3 \mod 4.$  We shall prove that for all 
$u_j\equiv 3\mod 4$ for all $j$.  We use induction and
let us suppose that $u_j\equiv 3 \mod 4$ for $j< n$.
Then from (\ref{unb}), we have
\[ 3u_n+6\sum_{j=1}^{n-1}\binom{2n}{2j}\equiv 1 \mod 4 .\]
Since $\sum_{j=1}^{n-1}\binom{2n}{2j}=2^{2n-1}-2$ is even,
we have $3u_n\equiv 1 \mod 4$ and this implies
that $u_n\equiv 3 \mod 4$. Thus for all $j$,
 $u_j\equiv 3 \mod 4$.  Therefore
\[
\nu_2(b_i)=\nu_2(g^{(2i)}(0)/(2i)!)
=(2i-1)-(2i-\kappa_2(2i))=\kappa_2(i)-1. \]
\end{proof}

\section{The index theorem for a homotopy $\CP(2k)$}

Let $\eta$ be the canonical complex line bundle over
$\CP(2k)$ whose first Chern class $x=c_1(\eta)$ is
generates the cohomology ring 
$H^*(\CP(2k);\Int)=\Int[x]/(x^{2k+1})$.
If  $f : M^{4k} \rightarrow \CP(2k)$ is a homotopy $\CP(2k)$,
then there exists 
a fiber homotopically trivial vector bundle $\zeta$
over $\CP(2k)$ such that the tangent bundle $\tau(M)$
is stably isomorphic to the pullback of 
$\tau(\CP(2k))\oplus \zeta $ by $f$ :
\[ \tau(M)\overset{s}{\sim} f^*(\tau(\CP(2k))\oplus \zeta). \]
Using Hirzebruch's index thorem, we see that
\[ \mbox{Index}(M)=\langle \mathcal{L}(M),[M] \rangle 
=\langle \mathcal{L} (\zeta)h(x)^{2k+1}, [\CP(2k)]\rangle, \]
where $\mathcal{L}$ denotes Hirzebruch's $L$-class $\sum_j L_j$ 
associated to the power series $h(x)$.
Since $\delta(M)x^2$ coincides with $p_1(\zeta)$, we have
to examine the Pontrjagin class of $\zeta$ 
when $\mbox{Index}(M)=1$ holds.

Let $\omega\in \widetilde{KO}(\CP(2k))$ denote the realification of
$\eta-1_\cmplx\in \widetilde{K}(\CP(2k))$.  It is known that 
$\widetilde{KO}(\CP(2k))$ is a free abelian group
generated by 
$\omega^j$\ $(j=1,2,\ldots,k)$ (\cite{san}).  
The real Adams operation on $\omega$ is given by the formula
\[ \psi_\Real^j(\omega)=T_j(\omega), \]
where $T_j(z)$ is a polynomial of $z$ having degree $j$ characterized by
the property
\[ T_j(t+t^{-1}-2)=t^j+t^{-j}-2 . \]
Since the coefficient of $z^j$ in $T_j(z)$ is one, we may
take $\psi_\Real^j(\omega)=T_j(\omega)$ \ $(1\le j \le k)$ 
as generators of $\widetilde{KO}(\CP(2k))$.

According to the solution of the
Adams-conjecture, the kernel of the $J$-map 
coincides with $\mbox{Image}(\psi^3_\Rat-1)$
when localized at $2$.  Therefore 
when we put $\zeta_j=(\psi_\Real^3-1)\psi_\Real^j(\omega)$, 
the fiber homotopically trivial vector bundle $\zeta$ can
be written as
\[ q\zeta=n_1\zeta_1+n_2\zeta_2+\cdots+n_k\zeta_k ,\]
for some integers $n_1, n_2, \ldots, n_k$ and an odd integer $q$.
Therefore we may write
\[ \zeta=m_1\zeta_1+m_2\zeta_2+\cdots+m_k\zeta_k ,\]
where $m_j, m_2, \ldots, m_k$ belong to $\Int_{(2)}$.  
We first calculate the total Pontrjagin class of $\psi_\Real^j(\omega)$.
We first note that
\begin{align*} \psi_\Real^j(\omega)\otimes \cmplx 
&= \psi_\cmplx^j(\omega\otimes \cmplx)
=\psi_\cmplx^j(\eta+\bar{\eta}-2_\cmplx) \\
&= \psi_\cmplx^j(\eta)+\psi_\cmplx^j(\bar{\eta})-2_\cmplx
=\eta^j+\bar{\eta}^j-2_\cmplx,
\end{align*}
whose Chern class is equal to $(1+jx)(1-jx)=1-j^2x^2$.  Therefore
the total Pontrjagin classes are given by
\[ p(\psi_\Real^j(\omega))=1+j^2x^2 ,\]
\[ p(\zeta_j)=p(\psi_\Real^{3j}(\omega)-\psi_\Real^j(\omega))
=\frac{1+(3j)^2x^2}{1+j^2x^2}, \]
and
\[ p(\zeta)=\prod_{j=1}^k p(\psi_\Real^j(\omega))^{m_j}
=\prod_{j=1}^k \left(\frac{1+(3j)^2x^2}{1+j^2x^2}\right)^{m_j} . \]
From this we have
\[ p_1(\zeta)= 8 \sum_{j=1}^k j^2 m_j. \]
The $\mathcal{L}$ class of $\zeta$ is written as
\[ \mathcal{L}(\zeta)=\prod_{j=1}^k \mathcal{L}(\zeta_j)^{m_j}
= \prod_{j=1}^k \biggl(\frac{h(3jx)}{h(jx)}\biggr)^{m_j}  
=\prod_{j=1}^k (1+8g(jx))^{m_j}.   \]

We calculate the index of $M^{2k}$:
\begin{align*}
&\mbox{Index}(M) = \langle \mathcal{L}(\zeta) h(x)^{2k+1}, [\CP(2k)]\rangle \\
&= \left(\mathcal{L}(\zeta)h(x)^{2k+1}\right)_{2k}
=\left(\prod_{j=1}^k (1+8g(jx))^{m_j}h(x)^{2k+1}\right)_{2k} \\
&= 1+ 8 \sum_{j=1}^k m_j (g(jx)h(x)^{2k+1})_{2k} \\
  &\qquad + \sum_{s\ge 2}8^s \sum_{i_1+\cdots +i_k=s}
\binom{m_1}{i_1}\cdots \binom{m_k}{i_k}
\biggl( g(x)^{i_1}g(2x)^{i_2}\cdots 
g(kx)^{i_k} h(x)^{2k+1}\biggr)_{2k},
\end{align*}
where $\bigl( f(x) \bigr)_{j}$ denotes the coefficient of $x^j$
in the formal power series $f(x)$.
We shall use the following notations:
\[ C(j_1, j_2, \cdots, j_s)=(g(j_1x)g(j_2x)\cdots
 g(j_sx)h(x)^{2k+1})_{2k}, \]
\[ D(i_1, i_2,\ldots, i_k)
= C(\underbrace{1,\ldots,1}_{i_1}, \underbrace{2,\ldots,2}_{i_2}, 
\ldots,\underbrace{k,\ldots,k}_{i_k}). \]
Then we have
\[ \mbox{Index}(M)=1+8\sum_{j=1}^k m_j C(j)
+ \sum_{s\ge 2}8^s \sum_{i_1+\cdots+i_k=s}
\binom{m_1}{i_1}\cdots\binom{m_k}{i_k}D(i_1,\ldots,i_k). \]
Since $\mbox{Index}(M)=1$, we have 
\begin{equation}\label{idx}  \sum_{j=1}^k m_j C(j)+\sum_{s\ge 2}8^{s-1}
\sum_{i_1+\cdots+i_k=s}
\binom{m_1}{i_1}\cdots\binom{m_k}{i_k}D(i_1,\ldots,i_k)=0. 
\end{equation}
Our target is to show that
$p_1(\zeta)$ is divisible by 16 from the condition (\ref{idx}).  This is equivalent to
the claim that
 $ \sum_{j=1}^k j^2 m_j $ is even.  This is also equivalent
to $\sum_{j:odd}m_j$ is even.  

\begin{lemm}\label{cs}
\begin{eqnarray}\label{csprop}
\bigl(g(x)^s h(x)^{2k+1}\bigr)_{2k}&=&
C(\underbrace{1,\ldots,1}_{s}) = D(s,0,\ldots,0)=\left(\frac{1}{(3+x)^s(1-x)}\right)_{k-s} \nonumber\\
&=&\frac{1}{4^s}\biggl(\frac{1}{1-x}+\frac{1}{3+x}+\frac{4}{(3+x)^2}
+\cdots+\frac{4^{s-1}}{(3+x)^s} \biggr)_{k-s} \nonumber \\
&=&\frac{1}{4^s 3^k}\biggl(3^k+(-1)^{k-s}\sum_{i=0}^{s-1} \binom{k-s+i}{i}
3^{s-1-i}4^i\biggr). \nonumber
\end{eqnarray}
\end{lemm}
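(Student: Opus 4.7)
The plan is to chain three reductions: a formal-residue and change-of-variables argument converting the power-series coefficient into a rational-function coefficient, a partial-fraction identity proved by induction, and term-by-term coefficient extraction using the generalized binomial formula. The equalities $C(\underbrace{1,\ldots,1}_s) = D(s,0,\ldots,0) = (g(x)^s h(x)^{2k+1})_{2k}$ are immediate from the definitions of $C$ and $D$.

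To prove the key identification $(g(x)^s h(x)^{2k+1})_{2k} = (1/[(3+x)^s(1-x)])_{k-s}$, I would first write the coefficient as a residue, $(F(x))_{2k} = \Res_x(F(x)/x^{2k+1})$. Using $h(x)=x/\tanh x$ and $g(x)=\tanh^2 x/(3+\tanh^2 x)$, the ratio telescopes to
$$\frac{g(x)^s h(x)^{2k+1}}{x^{2k+1}} = \frac{1}{(3+\tanh^2 x)^s\,\tanh^{2k-2s+1}x}.$$
Now I would invoke Lemma \ref{twth} with the formal substitution $y=\tanh x$; its compositional inverse has derivative $1/(1-y^2)$, so the residue transforms to
$$\Res_y \frac{1}{(3+y^2)^s\,y^{2k-2s+1}(1-y^2)},$$
which is the coefficient of $y^{2k-2s}$ in the even formal power series $1/[(3+y^2)^s(1-y^2)]$. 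Relabeling $y^2$ as $x$ yields the claim.

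Next, I would establish the partial-fraction decomposition
$$\frac{1}{(3+x)^s(1-x)} = \frac{1}{4^s}\Biggl(\frac{1}{1-x} + \sum_{i=1}^s \frac{4^{i-1}}{(3+x)^i}\Biggr)$$
by induction on $s$. The base case $s=1$ is immediate from $(3+x)+(1-x)=4$; for the inductive step, I would multiply the identity for $s$ by $1/(3+x)$ and apply the $s=1$ case to the resulting cross term $1/[(1-x)(3+x)]$, which regroups to the formula for $s+1$.

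Finally, I would extract the coefficient of $x^{k-s}$ termwise. The contribution from $1/(1-x)$ is $1$; writing $(3+x)^{-i} = 3^{-i}(1+x/3)^{-i}$ and applying the generalized binomial series (Proposition \ref{genfbin}) gives $(1/(3+x)^i)_{k-s} = (-1)^{k-s}\binom{i+k-s-1}{k-s}/3^{i+k-s}$. Pulling out the factor $3^k$ and reindexing $j=i-1$ (together with the symmetry $\binom{k-s+j}{k-s}=\binom{k-s+j}{j}$) assembles the announced closed form. The principal obstacle is the first step: I must verify that the expression in $\tanh x$ lies in the formal Laurent ring to which Lemma \ref{twth} applies, and that $y=\tanh x$ meets its hypotheses ($\tanh 0=0$, $\tanh'(0)=1\ne 0$); once this is in place the remaining manipulations are algebraic bookkeeping.
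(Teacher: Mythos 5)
Your proposal is correct and follows essentially the same route as the paper: converting the coefficient to a formal residue, substituting $y=\tanh x$ via Lemma \ref{twth} (with $G$ the inverse series of $\tanh$, so $G'(y)=1/(1-y^2)$), the inductive partial-fraction identity, and termwise binomial extraction of the coefficient of $x^{k-s}$, arriving at the same closed form.
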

\begin{proof}
\begin{align*}
\bigl(g(x)^s &h(x)^{2k+1}\bigr)_{2k}
=\Biggl( \biggl(\frac{\tanh^2x}{3+\tanh^2x}\biggr)^s 
\biggl(\frac{x}{\tanh x}\biggr)^{2k+1}\Biggr)_{x^{2k}} \\
&=\Res_x\Biggl(\biggl(\frac{\tanh^2x}{3+\tanh^2x}\biggr)^s 
\frac{1}{\tanh^{2k+1} x}\Biggr) \\
\intertext{by putting $y=\tanh x$,} 
& =\Res_y\Biggl(\biggl(\frac{y^2}{3+y^2}\biggr)^2\frac{1}{y^{2k+1}(1-y^2)}\Biggr) \\
&=\Res_y\Biggl(\frac{1}{y^{2k+1-2s}(3+y^2)^s(1-y^2)} \Biggr) \\
&= \Bigg( \frac{1}{(3+y^2)^s(1-y^2)}\Biggr)_{y^{2k-2s}} 
=\Biggl(\frac{1}{(3+x)^s(1-x)}\Biggr)_{x^{k-s}}.
\end{align*}
By induction we can show that
\[ \frac{1}{(3+x)^s(1-x)}=\frac{1}{4^s}
\Biggl(\frac{1}{1-x}+\frac{1}{3+x}+\frac{4}{(3+x)^2}
+\cdots+ \frac{4^{s-1}}{(3+x)^s} \Biggr). \]
From this we have
\begin{align*}
&\Biggl(\frac{1}{(3+x)^s(1-x)} \Biggr)_{k-s} \\
&= \frac{1}{4^s}\Biggl(1 +\biggl(-\frac{1}{3}\biggr)^{k-1}
\biggl(\frac{1}{3}+\frac{4}{3^2}\binom{k-s+1}{1}
+\frac{4^2}{3^3}\binom{k-s+2}{2}
+\cdots+\frac{4^{s-1}}{3^s}\binom{k-1}{s-1} \biggr)\Biggr) \\
&=\frac{1}{4^s3^k}\Biggl( 3^k+(-1)^{k-s}
\sum_{i=0}^{s-1}\binom{k-s+i}{i}3^{s-1-i}4^i \Biggr).
\end{align*}
\end{proof}

To simplify our notation, from now on we shall denote $\nu_2(k)$ by $r$.  
As a special case $s=1$, we have the following corollary.

\begin{coro}\label{cone}  {\rm (a)\ }\ 
 $C(1)=(3^k-(-1)^k)/(4\cdot 3^k), $\ and \
{\rm (b)\ }$\nu_2(C(1))=r$. \end{coro}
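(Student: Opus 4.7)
The plan is to derive both statements directly from results already established in the paper, with essentially no new ideas required.

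For part (a), I would simply specialize Lemma \ref{cs} to $s=1$. The sum $\sum_{i=0}^{s-1}\binom{k-s+i}{i}3^{s-1-i}4^i$ collapses to the single term $i=0$, which equals $1$. This yields
\[
C(1)=\frac{1}{4\cdot 3^k}\bigl(3^k+(-1)^{k-1}\bigr)=\frac{3^k-(-1)^k}{4\cdot 3^k},
\]
which is exactly the desired formula.

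For part (b), from the formula in (a) we have $\nu_2(C(1))=\nu_2(3^k-(-1)^k)-\nu_2(4\cdot 3^k)=\nu_2(3^k-(-1)^k)-2$, since $3^k$ is odd. So the task reduces to computing $\nu_2(3^k-(-1)^k)$, and here I would invoke Lemma \ref{twtw}(c) with $n=3$ and $i=k$. If $k$ is odd, then $r=0$ and the lemma gives $\nu_2(3^k+1)=\nu_2(3+1)=2$, so $\nu_2(C(1))=2-2=0=r$. If $k$ is even, the lemma gives $\nu_2(3^k-1)=\nu_2(3^2-1)+\nu_2(k)-1=3+r-1=r+2$, so $\nu_2(C(1))=(r+2)-2=r$. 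In either case $\nu_2(C(1))=r$, as claimed.

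There is no real obstacle here: part (a) is a direct substitution into Lemma \ref{cs}, and part (b) is a two-line application of Lemma \ref{twtw}(c) handling the parity of $k$ as separate cases. The only small point to be careful about is the sign bookkeeping when reading off $\nu_2(3^k-(-1)^k)$ in the two parity cases, but the case split in Lemma \ref{twtw}(c) is written in exactly the form needed.
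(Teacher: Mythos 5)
Your proposal is correct and follows the paper's own proof exactly: part (a) is obtained by setting $s=1$ in Lemma \ref{cs}, and part (b) by applying Lemma \ref{twtw}(c) to $\nu_2(3^k-(-1)^k)$. Your write-up is in fact more detailed than the paper's, which states these two steps without carrying out the parity cases.
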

\begin{proof}
(a) follows from the previous lemma.  
(b) is a result of (a) using Lemma \ref{twtw}\ (c).
\end{proof}

\begin{prop}\label{propa}
If $j$ is odd, then
$\nu_2(C(j))=r$. 
\end{prop}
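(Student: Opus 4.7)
My strategy is to compare $C(j)$ with $C(1)$: Corollary~\ref{cone}(b) already gives $\nu_2(C(1))=r$, so it suffices to show $\nu_2(C(j)-C(1))\ge r+1$.

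The first step is to reduce $C(j)$ to the coefficient of an explicit rational function. Substituting $y=\tanh x$ in $C(j)=\Res_x(g(jx)/\tanh^{2k+1}x)$ and using $\tanh(jx)=y\,\Phi_j(y^2)/\Psi_j(y^2)$ for odd $j$---where $\Phi_j(u)=\sum_i\binom{j}{2i+1}u^i$ and $\Psi_j(u)=\sum_i\binom{j}{2i}u^i$ come from expanding $e^{jx}=(\cosh x+\sinh x)^j$ and separating even and odd parts---I would obtain, with $u=y^2$,
\[ C(j)=\left(\frac{\Phi_j(u)^2}{(1-u)P_j(u)}\right)_{u^{k-1}},\qquad P_j(u):=3\Psi_j(u)^2+u\Phi_j(u)^2. \]
The identity $\Psi_j^2-u\Phi_j^2=(1-u)^j$ (from $(1+\sqrt u)^j(1-\sqrt u)^j=(1-u)^j$) lets me rewrite $P_j=4\Psi_j^2-(1-u)^j=3(1-u)^j+4u\Phi_j^2$; a short manipulation then yields
\[ C(j)-C(1)=3\left(\frac{\Phi_{j+1}(u)\Phi_{j-1}(u)}{M_j(u)M_1(u)}\right)_{u^{k-1}},\qquad M_n(u):=4\Psi_n(u)^2-(1-u)^n, \]
via the striking identity $(1-u)^{j-1}-\Phi_j(u)^2=-\Phi_{j+1}(u)\Phi_{j-1}(u)$, which follows from $2\sqrt u\,\Phi_n(u)=(1+\sqrt u)^n-(1-\sqrt u)^n$.

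The second ingredient is the $2$-adic estimate of the above coefficient. The identity $\binom{2m}{2l+1}=\frac{2m}{2l+1}\binom{2m-1}{2l}$ shows that every coefficient of $\Phi_{2m}(u)$ has $\nu_2\ge\nu_2(2m)$, hence $\nu_2(\Phi_{j+1}\Phi_{j-1})\ge\nu_2((j+1)(j-1))=\nu_2(j^2-1)\ge 3$ for odd $j$ by Lemma~\ref{twtw}. Moreover $M_n\equiv-(1-u)^n\pmod 4$ forces $M_jM_1\equiv(1-u)^{j+1}\pmod 4$, whose constant term is a $2$-adic unit, so $\Phi_{j+1}\Phi_{j-1}/(M_jM_1)$ can be expanded modulo successively higher powers of $2$, reducing the extraction of $(\,\cdot\,)_{u^{k-1}}$ to a weighted sum of binomials of the form $\binom{k+j-1-l}{j}$. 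The main obstacle I expect is upgrading the easy bound $\nu_2\ge3$ to $\nu_2\ge r+1$ when $r$ is large: this will require combining Lemma~\ref{cnton} with the $2$-adic structure of $k=2^{r}\cdot(\text{odd})$ to squeeze the remaining factor $2^{r+1-\nu_2(j^2-1)}$ out of the binomial sum itself.
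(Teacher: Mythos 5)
Your overall strategy---reduce to showing $\nu_2(C(j)-C(1))\ge r+1$ and invoke Corollary~\ref{cone}(b)---is exactly the paper's, and your algebraic identities check out: the substitution $y=\tanh x$ does give $C(j)=\bigl(\Phi_j(u)^2/((1-u)P_j(u))\bigr)_{u^{k-1}}$, the relation $\Psi_j^2-u\Phi_j^2=(1-u)^j$ and the identity $(1-u)^{j-1}-\Phi_j^2=-\Phi_{j+1}\Phi_{j-1}$ are correct, and the resulting closed form for $C(j)-C(1)$ is right. But the proof is not complete: the one estimate that actually carries the content of the proposition is the step you defer to at the end, and the easy bound $\nu_2(\Phi_{j+1}\Phi_{j-1})\ge\nu_2(j^2-1)\ge 3$ only settles the cases $r\le 2$. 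For general $r=\nu_2(k)$ you must show that the coefficient of $u^{k-1}$ in $\Phi_{j+1}\Phi_{j-1}/(M_jM_1)$ contributes the missing factor $2^{\,r+1-\nu_2(j^2-1)}$ (e.g.\ for $j=3$ the numerator supplies exactly $2^3$ and one still needs $\nu_2$ of the coefficient of $u^{k-1}$ in $(1+u)/((3+u)M_3)$ to be at least $r-2$). You state this is "the main obstacle I expect" and gesture at Lemma~\ref{cnton}, but no argument is given, and it is not a routine verification: it is a genuine arithmetic statement about how the $2$-order of $k$ propagates into binomial-type coefficients of the expansion of $1/(M_jM_1)$.

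The reason this gap is hard to close along your route, and why the paper takes a different one, is instructive. The paper never converts $C(j)-C(1)$ into a rational function; it keeps the power-series form $g(jx)-g(x)=\sum_{i\ge1}(j^{2i}-1)b_i x^{2i}$ and writes $h(x)^{2k+1}=h(x)\cdot h(x)^{2k}$, so that each term of the sum over $i_1+i_2+i_3=k$ carries \emph{three} independent sources of $2$-divisibility: $\nu_2(j^{2i_1}-1)=\nu_2(j^2-1)+\nu_2(i_1)\ge 3+\nu_2(i_1)$ (Lemma~\ref{twtw}(b)), $\nu_2(b_{i_1}a_{i_2})=\kappa_2(i_1)+\kappa_2(i_2)-2$ (Lemmas~\ref{nyua}, \ref{nyub}), and $\nu_2(a^{(2k)}_{i_3})\ge r+1-\nu_2(i_3)$ (Corollary~\ref{ppwfps}). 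Lemma~\ref{cnton} then converts $\nu_2(i_1)+\kappa_2(i_1)+\kappa_2(i_2)$ into $\nu_2(i_1+i_2)+1=\nu_2(k-i_3)+1$, and a short case analysis on $\nu_2(i_3)$ versus $r$ yields $\nu_2(C(j)-C(1))\ge r+2$. In your rational-function formulation the term-by-term factor $\nu_2(i_1)$ and the digit-sum contributions are invisible, so there is no mechanism in sight to produce the $r$. Either supply that missing $2$-adic analysis of the binomial sum, or revert to the power-series decomposition.
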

\begin{proof}
The case for $j=1$ is given in Corollary \ref{cone}.  
For any other odd number $j$, we have
\begin{align*}
C(j)& -C(1)= \biggl(\bigl( g(jx)-g(x) \bigr)h(x)^{2k+1}\biggr)_{2k} 
= \biggl( \bigl( g(jx)-g(x) \bigr) h(x) h(x)^{2k}\biggr)_{2k} \\
= &\biggl( \sum_{i_1\ge 1}b_{i_1}(j^{2i_1}-1) x^{2i_1}
\sum_{i_2\ge 0} a_{i_2}x^{2i_2}\sum_{i_3\ge 0} a^{(2k)}_{i_3} x^{2i_3}\biggr)_{2k} 
=\sum_{i_1+i_2+i_3=k, i_1\ge 1}(j^{2i_1}-1)b_{i_1}a_{i_2}a^{(2k)}_{i_3}.
\end{align*}
Here we put $U=(j^{2i_1}-1)b_{i_1}a_{i_2}a^{(2k)}_{i_3}$.  
From Lemma \ref{twtw} (b) we have
\[ \nu_2(j^{2i_1}-1)b_{i_1}=\nu_2(j^2-1)+\nu_2(i_1)+\kappa_2(i_1)-1
\ge \nu_2(i_1)+\kappa_2(i_1)+2. \]
From Lemma \ref{nyub}, we have
\[ \nu_2(a_{i_2})=\kappa_2(i_2)-1. \]
From Lemma \ref{ppwfps}, we have
\[
\nu_2(a^{(2k)}_{i_3})\ge \nu_2(2k)-\nu_2(i_3)=r+1-\nu_2(i_3). \]
Thus we have $\nu_2(U)\ge \nu_2(i_1)+\kappa_2(i_1)+\kappa_2(i_2)+1
+\nu_2(a^{(2k)}_{i_3})$.  
From this, we have
\[ \nu_2(U)\ge \nu_2(i_1)+\kappa_2(i_1)+\kappa_2(i_2)+1+\nu_2(a^{(2k)}_{i_3})
\ge \nu_2(k-i_3)+2+\max(r+1-\nu_2(i_3),0). \]
If $i_3=0$,  from Lemma \ref{cnton}, we have
$\nu_2(U)\ge \nu_2(i_1+i_2)+2=r+2$. 
If $i_3\ge 1$ and $\nu_2(i_3)\le r$, then since $\nu_2(k-i_3)\ge \nu_2(i_3)$,
we have $\nu_2(U)\ge r+3$.  If $\nu_2(i_3)>r$ , then
since $\nu_2(k-i_3)=r$, we have
$\nu_2(U)\ge r+2$.  Therefore we always have $\nu_2(U)\ge r+2$.  This
shows that $\nu_2(C(j)-C(1))\ge r+2$.  Therefore we have
$\nu_2(C(j))=r$. 
\end{proof}

\begin{prop}\label{propb}
Let $j$ be an even natural number.  Then $\nu_2(C(j))\ge r+1$.
\end{prop}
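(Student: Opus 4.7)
The plan is to estimate $\nu_2(C(j))$ term-by-term, in the spirit of the proof of Proposition~\ref{propa}, but now exploiting the fact that $j^{2i_1}$ itself (rather than $j^{2i_1}-1$) is highly $2$-divisible when $j$ is even. Writing $j=2^e j'$ with $e\ge 1$ and $j'$ odd, we have $\nu_2(j^{2i_1})=2ei_1\ge 2i_1$. Expanding $h(x)^{2k+1}=h(x)\cdot h(x)^{2k}$ I would write
\[
C(j)=\sum_{\substack{i_1+i_2+i_3=k\\ i_1\ge 1}} b_{i_1}\,j^{2i_1}\,a_{i_2}\,a_{i_3}^{(2k)},
\]
and by Lemma~\ref{nyua}, Lemma~\ref{nyub}, and Corollary~\ref{ppwfps}, each summand $U$ satisfies
\[
\nu_2(U)\ge \bigl(\kappa_2(i_1)-1\bigr)+2i_1+\max\bigl(0,\kappa_2(i_2)-1\bigr)+\max\bigl(0,r+1-\nu_2(i_3)\bigr).
\]
It then suffices to verify $\nu_2(U)\ge r+1$ for each individual summand.

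The crucial elementary fact I would use is $2i_1\ge 2^{\nu_2(i_1)+1}\ge \nu_2(i_1)+2$ for $i_1\ge 1$, which contributes enough slack to absorb the $-\nu_2(i_1)$ that appears when Lemma~\ref{cnton}(a) is applied. The argument splits into four cases according to whether $i_2$ and $i_3$ are zero or positive, with the cases $i_3\ge 1$ further split by whether $\nu_2(i_3)\le r+1$ or $\nu_2(i_3)\ge r+2$. The two easier cases go quickly: when $i_2=i_3=0$ we have $i_1=k$ and $2k\ge 2^{r+1}\ge r+2$ closes the estimate directly, while when $i_2\ge 1,\, i_3=0$, Lemma~\ref{cnton}(a) applied to $i_1+i_2=k$ gives $\kappa_2(i_1)+\kappa_2(i_2)\ge r+1-\nu_2(i_1)$, and the slack from $2i_1$ finishes the job.

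The main obstacle will be the remaining cases, in which $i_3\ge 1$ introduces competition between three sources of $2$-divisibility. When $\nu_2(i_3)\le r+1$, I would apply Lemma~\ref{cnton}(a) to $i_1+i_2=k-i_3$, obtaining $\nu_2(i_1)+\kappa_2(i_1)+\kappa_2(i_2)\ge \nu_2(k-i_3)+1$, and then use the arithmetic relation between $\nu_2(k-i_3)$, $\nu_2(i_3)$ and $r$ (together with $2i_1-\nu_2(i_1)\ge 2$) to conclude $\nu_2(U)\ge r+1$. When $\nu_2(i_3)\ge r+2$, the contribution from $a_{i_3}^{(2k)}$ degrades to the trivial lower bound $0$; fortunately, $\nu_2(i_3)>\nu_2(k)$ now forces $\nu_2(i_1)=\nu_2(k-i_3)=r$, whence $i_1\ge 2^r$ and $2ei_1\ge 2^{r+1}\ge r+2$ yields the needed estimate on its own. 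Since the bound is uniform over summands and independent of the particular even $j$, this will complete the proof that $\nu_2(C(j))\ge r+1$.
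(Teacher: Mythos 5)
Your argument is, in structure, the paper's own proof: the same expansion $C(j)=\sum_{i_1+i_2+i_3=k,\ i_1\ge1} j^{2i_1}b_{i_1}a_{i_2}a^{(2k)}_{i_3}$, the same per-factor bounds from Lemmas \ref{nyua}, \ref{nyub} and Corollary \ref{ppwfps}, and the same combination of Lemma \ref{cnton}(a) with the elementary inequality $2i_1\ge\nu_2(i_1)+2$. Your cases $i_2=i_3=0$, $i_2\ge1,\ i_3=0$, and $i_3\ge1$ with $\nu_2(i_3)\le r+1$ all check out (the paper splits at $\nu_2(i_3)\le r$ rather than $r+1$, which changes nothing of substance).

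There is, however, one false step. In the case $\nu_2(i_3)\ge r+2$ you assert that $\nu_2(i_3)>\nu_2(k)$ forces $\nu_2(i_1)=\nu_2(k-i_3)=r$. What is forced is $\nu_2(i_1+i_2)=\nu_2(k-i_3)=r$, not $\nu_2(i_1)=r$: for $r\ge1$ take $k=5\cdot2^{r}$, $i_3=2^{r+2}$, $i_1=1$, $i_2=2^{r}-1$; then $\nu_2(i_3)=r+2$ but $\nu_2(i_1)=0$. So the shortcut ``$i_1\ge2^{r}$, hence $2ei_1\ge2^{r+1}\ge r+2$'' is not available. Fortunately this case needs no special trick: exactly as in your other cases, use the trivial bound $\nu_2(a^{(2k)}_{i_3})\ge0$ and apply Lemma \ref{cnton}(a) to $i_1+i_2=k-i_3$, whose $2$-order is $r$ here, to get
\[ \nu_2(U)\ \ge\ \bigl(\kappa_2(i_1)-1\bigr)+2i_1+\bigl(\kappa_2(i_2)-1\bigr)\ \ge\ \bigl(r+1-\nu_2(i_1)\bigr)-2+2i_1\ =\ r-1+\bigl(2i_1-\nu_2(i_1)\bigr)\ \ge\ r+1. \]
With that repair your proof coincides with the paper's.
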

\begin{proof}
As in the proof of Proposition \ref{propa}, we have
\[ C(j)=\sum_{i_1+i_2+i_3=k, i_1\ge 1}j^{2i_1}b_{i_1}a_{i_2}a^{(2k)}_{i_3}. \]
Put $B=j^{2i_1}b_{i_1}a_{i_2}a^{(2k)}_{i_3}$.   
Since $\nu_2(j^{2i_1})\ge 2i_1$, $\nu_2(b_{i_1})=\kappa_2(i_1)-1$, 
$\nu_2(a_{i_2})=\kappa_2(i_2)-1$ and $\nu_2(a^{(2k)}_{i_3})\ge
\max(r+1-\nu_2(i_3),0)$, we have
\begin{align*} \nu_2(B) 
&\ge 2i_1+\kappa_2(i_1)+\kappa_2(i_2)-2+\max(r+1-\nu_2(i_3),0) \\
&\ge 2i_1+\nu_2(k-i_3)-\nu_2(i_1)-1+\max(r+1-\nu_2(i_3),0).
\end{align*}
If $i_3=0$, then since $i_1\ge\nu_2(i_1)+1$, we have
\[ \nu_2(B)\ge 2i_1+r-\nu_2(i_1)-1 \ge r+i_1\ge r+1.  \]
If $i_3\ge 1$ and $\nu_2(i_3)\le r$, then we have
$\nu_2(k-i_3)\ge \nu_3(i_3)$.  Hence we have
\[ \nu_2(B)\ge 2i_1+\nu_2(i_3)-\nu_2(i_1)-1+(r+1-\nu_2(i_3))
= r+2i_1-\nu_2(i_1)\ge r+1+(i_1-\nu_2(i_1))\ge r+2.\]
Finally if $i_3\ge 1$ and $\nu_2(i_3)> r$, then we have
\[ \nu_2(B)\ge 2i_1+r-\nu_2(i_1)-1\ge i_1+r\ge r+1. \]
Thus in all cases, we have $\nu_2(B)\ge r+1$.  This proves
our assertion.
\end{proof}

\begin{lemm}\label{csord}
$\nu_2(C(\underbrace{1,\ldots,1}_{s}))\ge r+2-2s.$
\end{lemm}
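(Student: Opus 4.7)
The plan is to rewrite $N_{s,k} := 4^s\cdot 3^k \cdot C(\underbrace{1,\ldots,1}_s)$ in a form that exhibits the required $2$-adic divisibility transparently.  By Lemma~\ref{cs}, $N_{s,k}$ is an integer, and since $3^k$ is odd, the claim $\nu_2\bigl(C(\underbrace{1,\ldots,1}_s)\bigr)\ge r+2-2s$ reduces to showing $\nu_2(N_{s,k})\ge r+2$.

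The key step is to establish the identity
\[ N_{s,k}=(-1)^k\sum_{j=s}^{k}(-4)^j\binom{k}{j}. \]
Granted this, each summand $(-4)^j\binom{k}{j}$ with $j\ge s\ge 1$ satisfies $\nu_2\bigl((-4)^j\binom{k}{j}\bigr)\ge 2j+\nu_2\bigl(\binom{k}{j}\bigr)$, and the relation $j\binom{k}{j}=k\binom{k-1}{j-1}$ gives $\nu_2\bigl(\binom{k}{j}\bigr)\ge r-\nu_2(j)$.  Hence $\nu_2\bigl((-4)^j\binom{k}{j}\bigr)\ge r+2j-\nu_2(j)\ge r+2$, since $2j-\nu_2(j)\ge 2$ for every $j\ge 1$.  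Every summand is divisible by $2^{r+2}$, so $\nu_2(N_{s,k})\ge r+2$, as required.

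To establish the identity, I would combine the binomial expansion
\[ 3^k=(-1+4)^k=(-1)^k\sum_{j=0}^{k}(-4)^j\binom{k}{j} \]
with the auxiliary formula
\[ S_{s,k}=(-1)^{s-1}\sum_{j=0}^{s-1}(-4)^j\binom{k}{j}, \]
where $S_{s,k}$ denotes the sum appearing in Lemma~\ref{cs}.  Substituting these into $N_{s,k}=3^k+(-1)^{k-s}S_{s,k}$ and using $(-1)^{k-s}(-1)^{s-1}=-(-1)^k$ yields exactly the displayed identity.  The auxiliary formula itself I would prove by induction on $s$ using the recurrence
\[ S_{s,k}=3\,S_{s-1,k-1}+4^{s-1}\binom{k-1}{s-1}, \]
which follows from a direct index-shift in the defining sum for $S_{s,k}$.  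Checking that the candidate closed form satisfies the same recurrence reduces to Pascal's identity together with a telescoping cancellation.

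I expect the main obstacle to be the inductive verification of the auxiliary formula: the sign bookkeeping in the Pascal/telescoping step requires some care, but the remaining $2$-adic estimates reduce to the elementary facts on $\nu_2$ already collected in Lemmas~\ref{twon} and \ref{twtw}.
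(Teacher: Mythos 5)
Your proposal is correct, and it proves the lemma by a genuinely different route from the paper. Both arguments start from the closed form in Lemma~\ref{cs} and reduce the claim to $\nu_2(N_{s,k})\ge r+2$ for $N_{s,k}=3^k+(-1)^{k-s}\sum_{i=0}^{s-1}\binom{k-s+i}{i}3^{s-1-i}4^i$, but they diverge there. The paper keeps the sum in its given form, views it (minus its value at $k=0$) as a polynomial in $k$ with coefficients in $4\Int_{(2)}$ and no constant term, and thereby splits $N_{s,k}$ as $\bigl(3^k-(-1)^k\bigr)+(-1)^{k-s}\bigl(w(k)-w(0)\bigr)$; the first piece has $2$-order exactly $r+2$ by Lemma~\ref{twtw}, and the second has $2$-order at least $\nu_2(4k)=r+2$. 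You instead identify $N_{s,k}$ outright as a tail of the binomial expansion of $3^k=(4-1)^k$, namely $N_{s,k}=(-1)^k\sum_{j=s}^{k}(-4)^j\binom{k}{j}$ (I checked your derivation: the partial-sum identity $S_{s,k}=(-1)^{s-1}\sum_{j=0}^{s-1}(-4)^j\binom{k}{j}$ does satisfy the recurrence $S_{s,k}=3S_{s-1,k-1}+4^{s-1}\binom{k-1}{s-1}$, and the Pascal/telescoping verification goes through by writing $-3=1-4$). The termwise bound $\nu_2\bigl(4^j\binom{k}{j}\bigr)\ge 2j+r-\nu_2(j)\ge r+2$, via $j\binom{k}{j}=k\binom{k-1}{j-1}$ and $\nu_2(j)\le j-1$, is also correct. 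Your version is arguably more transparent: the divisibility by $2^{r+2}$ is visible summand by summand, and it bypasses the evaluation $w(0)=(-1)^{s-1}-3^{s-1}$ and the appeal to $\nu_2(3^k-(-1)^k)=r+2$; the cost is the extra combinatorial identity, whereas the paper's polynomial-in-$k$ trick needs no closed form for the partial sum. One cosmetic remark: in the application one always has $s\le k$ (otherwise $C(1,\ldots,1)=0$ since $g(x)^s$ has order $2s>2k$), so the tail sum is nonempty exactly when the statement has content.
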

\begin{proof} The assertion is true for $s=1$ by Corollary \ref{cone}.
So we assume that $s\ge 2$. 
From Lemma \ref{cs} we have
\begin{align*}
C(\underbrace{1,\cdots,1}_s)&=\frac{1}{4^s3^k}\biggl(3^k+(-1)^{k-s}
\sum_{i=0}^{s-1}\binom{k-s+i}{i}3^{s-1-i}4^i\biggr) \\
&=\frac{1}{4^s3^k}\biggl(3^k+(-1)^{k-s}\sum_{i=0}^{s-1}
\frac{(k-s+1)(k-s+2)\cdots(k-s+i)}{i!}3^{s-1-i}4^i\biggr) \\
&=\frac{1}{4^s3^k}\biggl( 3^k+(-1)^{k-s}\sum_{i=0}^{s-1}
\frac{3^{s-1-i}4^i}{i!}(k-s+1)(k-s+2)\cdots(k-s+i)\biggr).
\end{align*}
Define a polynomial with variable $k$ :
\[ w(k)=\sum_{i=1}^{s-1}\frac{3^{s-1-i}4^i}{i!}(k-s+1)(k-s+2)\cdots(k-s+i) . \]
We have
\[ \nu_2\biggl(\frac{3^{s-1-i}4^i}{i!}\biggr)
= 2i-(i-\kappa_2(i))=i+\kappa_2(i)\ge 2. \]
Therefore $w(k)$ is a polynomial in $k$ with coefficients in $4\Int_{(2)}$.
As for the constant term $w(0)$, we have
\begin{align*}
w(0) &= \sum_{i=1}^{s-1}\frac{3^{s-1-i}4^i}{i!}(-s+1)(-s+2)\cdots(-s+i) \\
&=\sum_{i=1}^{s-1}\frac{(s-1)(s-2)\cdots(s-i)}{i!}3^{s-1-i}(-4)^i \\
&=\sum_{i=1}^{s-1}\binom{s-1}{i}3^{s-1-i}(-4)^i
= (3+(-4))^{s-1}-3^{s-1}=(-1)^{s-1}-3^{s-1}.
\end{align*}
Therefore we have
\begin{align*}
C(\underbrace{1,\cdots,1}_s)&=\frac{1}{4^s3^k}\bigl(3^k+(-1)^{k-s}(3^{s-1}+w(k))\bigr) \\
&= \frac{1}{4^s3^k}\bigl( 3^k +(-1)^{k-s}(3^{s-1}+w(0))+(-1)^{k-s}(w(k)-w(0))\bigr)
\\ 
&=\frac{1}{4^s3^k}\bigl(3^k-(-1)^k+(-1)^{k-s}(w(k)-w(0))\bigr).
\end{align*}
By Lemma \ref{twtw}, $\nu_2(3^k-(-1)^k)=r+2$.  
As a polynomial in  $k$,
all the coefficients of $w(k)-w(0)$ in have 2-orders at least 2.
Therefore $\nu_2(w(k)-w(0))\ge r+2$.  
This proves that $\nu_2(C(1,\ldots,1))\ge r+2-2s$.
\end{proof}

\begin{lemm}\label{diffcsodd}
Let $j_1, j_2,\ldots,j_s $ and $j'_1$ be odd natural numbers.
Then $\nu_2(C(j_1,j_2,\ldots,j_s)-C(j'_1, j_2,\ldots,j_s))\ge
r+3-s.$
\end{lemm}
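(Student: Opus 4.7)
The plan is to expand the difference as a coefficient extraction and estimate the 2-adic valuation of each resulting summand, paralleling the proofs of Propositions \ref{propa} and \ref{propb}. Writing
\[ C(j_1,\ldots,j_s)-C(j'_1,j_2,\ldots,j_s)
=\biggl(\bigl(g(j_1x)-g(j'_1x)\bigr)\prod_{t=2}^{s}g(j_tx)\cdot h(x)\cdot h(x)^{2k}\biggr)_{2k}, \]
and using the expansions $g(jx)=\sum_{i\ge1}b_ij^{2i}x^{2i}$, $h(x)=\sum_{i\ge0}a_ix^{2i}$, $h(x)^{2k}=\sum_{i\ge0}a^{(2k)}_ix^{2i}$, this becomes a sum, over tuples $(i_1,\ldots,i_{s+2})$ with $i_1+\cdots+i_{s+2}=k$ and $i_1,\ldots,i_s\ge1$, of terms
\[ V=b_{i_1}\bigl(j_1^{2i_1}-(j'_1)^{2i_1}\bigr)\cdot\prod_{t=2}^{s}b_{i_t}j_t^{2i_t}\cdot a_{i_{s+1}}\cdot a^{(2k)}_{i_{s+2}}, \]
and it suffices to prove $\nu_2(V)\ge r+3-s$ for each such term.

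The main obstacle, and the essential arithmetic input not stated in Section~2, is the following analog of Lemma \ref{twtw}(b) for differences: for odd integers $n,m$ and $i\ge1$,
\[ \nu_2(n^{2i}-m^{2i})=\nu_2(n^2-m^2)+\nu_2(i)\ge 3+\nu_2(i). \]
The inequality $\nu_2(n^2-m^2)\ge 3$ holds because $n^2-m^2=(n-m)(n+m)$ and, $n,m$ both being odd, one of $n\pm m$ is divisible by $4$ while the other is $\equiv 2\pmod 4$. The equality is proved by induction on $\nu_2(i)$: when $i$ is odd, the factorization $n^{2i}-m^{2i}=(n^2-m^2)\sum_{\ell=0}^{i-1}n^{2\ell}m^{2(i-1-\ell)}$ has an odd second factor; when $i=2j$, the identity $n^{2i}-m^{2i}=(n^{2j}-m^{2j})(n^{2j}+m^{2j})$ together with $n^{2j},m^{2j}\equiv1\pmod 8$ gives $\nu_2(n^{2j}+m^{2j})=1$, completing the induction.

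Combining this with $\nu_2(b_{i_t})=\kappa_2(i_t)-1$ (Lemma \ref{nyub}), $\nu_2(a_{i_{s+1}})\ge\kappa_2(i_{s+1})-1$ (Lemma \ref{nyua}, still valid for $i_{s+1}=0$), $\nu_2(j_t^{2i_t})=0$ for the odd $j_t$, and $\nu_2(a^{(2k)}_{i_{s+2}})\ge r+1-\nu_2(i_{s+2})$ when $i_{s+2}\ge1$ (Corollary \ref{ppwfps}), we obtain
\[ \nu_2(V)\ge\nu_2(i_1)+\sum_{t=1}^{s+1}\kappa_2(i_t)+(2-s)+\nu_2(a^{(2k)}_{i_{s+2}}). \]
Since $i_1\ge1$, Lemma \ref{cnton}(b) applied to $i_1,i_2,\ldots,i_{s+1}$ yields $\nu_2(i_1)+\sum_{t=1}^{s+1}\kappa_2(i_t)\ge\nu_2(k-i_{s+2})+1$, and hence
\[ \nu_2(V)\ge\nu_2(k-i_{s+2})+(3-s)+\nu_2(a^{(2k)}_{i_{s+2}}). \]
A three-case analysis on $i_{s+2}$, mirroring that in Propositions \ref{propa} and \ref{propb}, concludes: if $i_{s+2}=0$, then $\nu_2(k)=r$ and $a^{(2k)}_0=1$ yield $\nu_2(V)\ge r+3-s$; if $1\le i_{s+2}$ with $\nu_2(i_{s+2})\le r$, then $\nu_2(k-i_{s+2})\ge\nu_2(i_{s+2})$ combines with $\nu_2(a^{(2k)}_{i_{s+2}})\ge r+1-\nu_2(i_{s+2})$ to give $\nu_2(V)\ge r+4-s$; finally if $\nu_2(i_{s+2})>r$, then $\nu_2(k-i_{s+2})=r$ gives $\nu_2(V)\ge r+3-s$. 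All other steps are routine bookkeeping; the only novelty over the existing propositions is the extension of Lemma \ref{twtw}(b) to differences of two odd $2i$-th powers.
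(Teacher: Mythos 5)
Your proof is correct and follows essentially the same route as the paper: the same expansion into terms $V$, the same combination of Lemmas \ref{nyua}, \ref{nyub}, \ref{cnton} and Corollary \ref{ppwfps}, and the same three-case analysis on the last index. The only difference is that you prove the bound $\nu_2(j_1^{2i_1}-(j_1')^{2i_1})\ge 3+\nu_2(i_1)$ from scratch, whereas the paper deduces it from Lemma \ref{twtw}(b) (it also follows at once by writing the difference as $(j_1^{2i_1}-1)-((j_1')^{2i_1}-1)$ and using the ultrametric inequality); either way the step is sound.
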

\begin{proof} We have
\begin{align*}
 C(j_1,j_2,\ldots&,j_s)-C(j'_1,j_2,\ldots,j_s)  \\
& = \sum_{i_1+\cdots+i_s+i_{s+1}+l=k}
(j^{2i_1}_1-j'^{2i_1}_1)j_2^{2i_2} \cdots j_s^{2i_s}
b_{i_1}b_{i_2}\cdots b_{i_s}a_{i_{s+1}}a^{(2k)}_l.
\end{align*}
We put $V=(j_1^{2i_1}-j'^{2i_1}_1)j_2^{2i_2}\cdots j_s^{2i_s}
b_{i_1}b_{i_2}\cdots b_{i_s}a_{i_{s+1}}a^{(2k)}_l$.  
From Lemma \ref{twtw}, we have
\[ \nu_2(j^{2i_1}-j'^{2i_1}_1)\ge 3+\nu_2(i_1). \]
From Lemmas \ref{cnton}, \ref{nyua} and \ref{nyub}, we have
\[ \nu_2(b_{i_1}\cdots b_{i_s}a_{i_{s+1}})
= \kappa_2(i_1)+\cdots+\kappa_2(i_{s+1})-(s+1)
\ge \nu_2(k-l)-\nu_2(i_1)-s. \] 
Hence we have
\[ \nu_2(V)\ge \nu_2(k-l) + 3 - s + \nu_2(a^{(2k)}_l). \]
If $l=0$, then we have 
\[ \nu_2(V)\ge r+3-s .\]
If $l\ge1$ and $\nu_2(l)\le r$, then from Corollary \ref{ppwfps}
we have
\[ \nu_2(V)\ge \nu_2(l) + 3 - s +(r+1-\nu_2(l))
= r+4-s . \]
If $l\ge1$ and $\nu_2(l)>r$, then we have
\[ \nu_2(V)\ge r+3-s. \]  
Therefore $\nu_2(V)\ge r+3-s$ holds.  
\end{proof}

\begin{coro}\label{csodd}
If $j_1, \ldots, j_s$ are all odd,
then we have
$\nu_2(C(j_1,\ldots,j_s)) \ge r+2-2s$.
\end{coro}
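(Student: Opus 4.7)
My plan is a straightforward telescoping argument that combines the two previous lemmas. The target bound $r+2-2s$ is exactly what Lemma \ref{csord} gives for the all-ones tuple $(1,1,\ldots,1)$, so I only need to show that replacing the entries of $(1,\ldots,1)$ by arbitrary odd numbers $j_1,\ldots,j_s$ one at a time does not decrease the $2$-order below $r+2-2s$.

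First, I would observe that $C(j_1,\ldots,j_s)$ is symmetric in its arguments, since
\[
C(j_1,\ldots,j_s)=\bigl(g(j_1x)g(j_2x)\cdots g(j_sx)h(x)^{2k+1}\bigr)_{2k}
\]
depends only on the unordered product. Consequently, the bound in Lemma \ref{diffcsodd} applies not only when we modify the first argument, but when we modify any single argument while keeping the others fixed. This symmetry is the one conceptual point that makes the telescoping work.

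Next, I would write the telescoping identity
\[
C(j_1,\ldots,j_s)-C(\underbrace{1,\ldots,1}_s)
=\sum_{t=1}^{s}\Bigl(C(j_1,\ldots,j_t,\underbrace{1,\ldots,1}_{s-t})-C(j_1,\ldots,j_{t-1},\underbrace{1,\ldots,1}_{s-t+1})\Bigr).
\]
By the symmetry above together with Lemma \ref{diffcsodd} (applied with $j_1',\ldots$ specialized to $1$ in the slot being changed), each summand has $2$-order at least $r+3-s$. Therefore the whole difference satisfies
\[
\nu_2\bigl(C(j_1,\ldots,j_s)-C(\underbrace{1,\ldots,1}_s)\bigr)\ge r+3-s.
\]

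Finally, combining this with Lemma \ref{csord}, which gives $\nu_2(C(1,\ldots,1))\ge r+2-2s$, yields
\[
\nu_2\bigl(C(j_1,\ldots,j_s)\bigr)\ge\min(r+2-2s,\ r+3-s)=r+2-2s,
\]
where the last equality holds because $s\ge 1$ (so $r+3-s\ge r+2-2s$). I do not expect any genuine obstacle; the only thing to be careful about is recording the symmetry of $C$ explicitly so that Lemma \ref{diffcsodd} can be invoked for a change in any coordinate, and checking the elementary inequality $s\ge 1 \Rightarrow r+3-s\ge r+2-2s$ used in taking the minimum.
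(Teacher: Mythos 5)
Your proof is correct and follows essentially the same route as the paper: compare $C(j_1,\ldots,j_s)$ with $C(1,\ldots,1)$ using Lemma \ref{diffcsodd}, then invoke Lemma \ref{csord} and the inequality $r+3-s\ge r+2-2s$. You are in fact slightly more careful than the paper, which applies Lemma \ref{diffcsodd} to the full difference in a single step without spelling out the coordinate-by-coordinate telescoping and the symmetry of $C$ that justify it.
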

\begin{proof}
From Lemma \ref{diffcsodd}, we have
$\nu_2(C(j_1,\ldots,j_s)-C(1,\ldots,1))\ge r+3-s$.
On the other we know from Corollary \ref{csord}
that $C(1,\ldots,1)$ satisfies
$\nu_2(C(1,\ldots,1))\ge r+2-2s$.
Since $r+3-s > r+2-2s$ holds, we have 
$\nu_2(C(j_1,\ldots,j_s))\ge r+2-2s$. 
\end{proof}

\begin{lemm}\label{cseven}
If there exists at least one even number
in $j_1, \ldots, j_s$, then
we have 
\[ \nu_2(C(j_1,\ldots,j_2))\ge r+2-s .\] 
\end{lemm}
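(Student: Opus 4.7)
My plan is to adapt the method used in Proposition \ref{propb} and Lemma \ref{diffcsodd}. Without loss of generality I may assume $j_1$ is even, since permuting the $j_r$ does not change $C(j_1,\ldots,j_s)$. Following the pattern set earlier, I split $h(x)^{2k+1} = h(x)\cdot h(x)^{2k}$ and expand
\[ C(j_1,\ldots,j_s) = \sum_{i_1+\cdots+i_{s+1}+l=k,\ i_1,\ldots,i_s\ge 1} V, \]
where
\[ V = \Bigl(\prod_{r=1}^s j_r^{2i_r}\Bigr)\Bigl(\prod_{r=1}^s b_{i_r}\Bigr)\, a_{i_{s+1}}\, a_l^{(2k)}. \]
It then suffices to prove $\nu_2(V)\ge r+2-s$ for every admissible index.

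Assembling factor bounds, I use $\nu_2(j_1^{2i_1})\ge 2i_1$ (because $j_1$ is even), $\nu_2(b_{i_r}) = \kappa_2(i_r)-1$ for $r=1,\ldots,s$ and $\nu_2(a_{i_{s+1}})\ge \kappa_2(i_{s+1})-1$ from Lemmas \ref{nyua} and \ref{nyub} (using the convention $\kappa_2(0)=0$, which still gives a valid inequality at the boundary $i_{s+1}=0$), and $\nu_2(a_l^{(2k)})\ge \max(r+1-\nu_2(l),0)$ from Corollary \ref{ppwfps}. Applying Lemma \ref{cnton}(b) to the tuple $(i_1,\ldots,i_{s+1})$---which is legal because $i_1\ge 1$---to bound $\kappa_2(i_1)+\cdots+\kappa_2(i_{s+1}) \ge \nu_2(k-l)+1-\nu_2(i_1)$, after one line of algebra I obtain
\[ \nu_2(V) \ge 2i_1 - \nu_2(i_1) + \nu_2(k-l) - s + \max\bigl(r+1-\nu_2(l),0\bigr). \]
The elementary inequality $i_1\ge \nu_2(i_1)+1$ valid for $i_1\ge 1$ then yields $2i_1 - \nu_2(i_1) \ge 2$.

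The rest is the same three-case split on $l$ used in Propositions \ref{propa} and \ref{propb}: (i) if $l=0$ then $\nu_2(k-l)=r$ and the max term vanishes, giving $\nu_2(V)\ge r+2-s$; (ii) if $l\ge 1$ with $\nu_2(l)\le r$ then $\nu_2(k-l)\ge \nu_2(l)$ and the max equals $r+1-\nu_2(l)$, so the $\nu_2(l)$ contributions cancel and $\nu_2(V)\ge r+3-s$; (iii) if $l\ge 1$ with $\nu_2(l)>r$ then $\nu_2(k-l)=r$ and the max vanishes, giving $\nu_2(V)\ge r+2-s$. In every case $\nu_2(V)\ge r+2-s$, which proves the lemma. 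I do not anticipate a serious obstacle; the only step requiring attention is verifying that the bound $2i_1-\nu_2(i_1)\ge 2$ combined with the weak case (iii) still leaves $r+2-s$ on the right, which it does precisely because the assumption that at least one $j_r$ is even gives us the extra $2i_1$ that was absent in the proof of Lemma \ref{diffcsodd}.
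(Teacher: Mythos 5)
Your proposal is correct and follows essentially the same route as the paper's own proof: the same expansion of $C(j_1,\ldots,j_s)$, the same factor bounds (including the key use of $\nu_2(j_1^{2i_1})\ge 2i_1$ together with Lemma \ref{cnton} and $2i_1-\nu_2(i_1)\ge 2$), and the same three-case split on $l$. The only difference is that you make explicit the boundary convention $\kappa_2(0)=0$ for the $a_{i_{s+1}}$ factor, which the paper leaves implicit.
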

\begin{proof}
Without loss of generality, we may assume that
$j_1$ is even.  
We have
\[ C(j_1,\ldots,j_s)=\sum_{i_1+\cdots+i_{s+1}+l=k} j_1^{2i_1}\cdots
j_s^{2i_s}b_{i_1}\cdots b_{i_s}a_{i_{s+1}}a^{(2k)}_l .\]
We put $ U=j_1^{2i_1}\cdots j_s^{2i_s}b_{i_1}\cdots b_{i_s}
a_{i_{s+1}}a^{(2k)}_l$.  
Since $j_1$ is even, we have
\[ \nu_2(j_1^{2i_1}\cdots j_s^{2i_s})\ge 2i_1, \]
and
\[ \nu_2(b_{i_1}\cdots b_{i_s}a_{i_{s+1}})
= \kappa_2(i_1)+\cdots+\kappa_2(i_{s+1})-(s+1)
\ge \nu_2(k-l)-\nu_2(i_1)-s. \]
Hence we have
\[ \nu_2(U)\ge 2i_1+\nu_2(k-l)-\nu_2(i_1)-s +\nu_2(a^{(2k)}_l). \]
If $l=0$, then we have
\[ \nu_2(U)\ge 2i_1+r-\nu_2(i_1)-s \ge r+ 2 -s  . \]
If $i\ge1 $ and $\nu_2(l)\le r$, then we have
\[ \nu_2(U)\ge 2i_1+\nu_2(l)-\nu_2(i_1)-s+(r+1-\nu_2(l))
\ge r-s+3 . \]
If $i\ge1$ and $\nu_2(l)>r$, then we have
\[ \nu_2(U)\ge 2i_1+r-\nu_2(i_1)-s \ge r+2-s. \]
This shows that $\nu_2(U)\ge r-s+2$ always holds.
From this we conclude that 
\[ \nu_2(C(j_1,\ldots,j_s))\ge r+2-s. \]
\end{proof}

Combining Corollary \ref{csodd} and Lemma \ref{cseven},
we have the following proposition.
\begin{prop}\label{propc}
If $s\ge 2$, then for any natural numbers
$j_1, j_2, \ldots, j_s$, we have
\[ \nu_2(C(j_1, j_2, \ldots, j_s))\ge r+2-2s . \]
\end{prop}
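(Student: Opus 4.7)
The proof is essentially a case split already set up by the two previous results, so my plan is to simply combine them and check that the weaker of the two bounds is still what the proposition requires.

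First I would partition the tuples $(j_1, \ldots, j_s)$ by whether all entries are odd or at least one is even. In the all-odd case, Corollary \ref{csodd} yields directly $\nu_2(C(j_1, \ldots, j_s)) \ge r + 2 - 2s$, which is the stated bound. In the remaining case, after relabeling so that $j_1$ is even (which does not affect $C$, since it is symmetric in its arguments up to the order of multiplication in the product $g(j_1 x)\cdots g(j_s x)$), Lemma \ref{cseven} gives $\nu_2(C(j_1, \ldots, j_s)) \ge r + 2 - s$.

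It then remains to verify that the bound from Lemma \ref{cseven} is at least as strong as the one claimed. Since $s \ge 2$, we have $s \le 2s - 2 < 2s$, hence $r + 2 - s \ge r + 4 - 2s > r + 2 - 2s$. Therefore in the mixed-parity case the conclusion follows \emph{a fortiori}.

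There is no real obstacle here; the whole content of the proposition has been absorbed into Corollary \ref{csodd} and Lemma \ref{cseven}. The one point worth mentioning is that the odd case is the tight one (the bound is $r + 2 - 2s$), while the even case is strictly better by at least $s$; this asymmetry is natural because each even $j_i$ contributes an extra factor of $j_i^{2i_1}$ with $\nu_2 \ge 2i_1$, which is what produced the extra $s$ in Lemma \ref{cseven}. So the proof is a one-line case distinction followed by the elementary comparison $r + 2 - s > r + 2 - 2s$ for $s \ge 2$.
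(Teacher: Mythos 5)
Your proof is correct and is exactly the paper's argument: the authors likewise obtain Proposition \ref{propc} by combining Corollary \ref{csodd} (all $j_i$ odd, bound $r+2-2s$) with Lemma \ref{cseven} (some $j_i$ even, bound $r+2-s$), noting that the latter is at least as strong. Your additional remark that the even case is strictly better and the odd case is the tight one is accurate but not needed for the conclusion.
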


\section{Proof of the main theorem}
If $M^{4k}$ is a homotopy $\CP(2k)$, 
we obtained the relation (\ref{idx}).  
From Lemma \ref{propc}, we have
\[ \nu_2(8^{s-1}D(i_1,\ldots,i_s))\ge 3(s-1)+r+2-2s=r+s-1\ge r+1. \]
Thus from (\ref{idx}), we have
\[ \sum_{j=1}^k m_jC(j)\equiv 0 \mod 2^{r+1}. \]
From Proposition \ref{propa}, we know that
when $j$ is odd then  $C(j)\equiv 1 \mod 2^{r+1}$.
And from Proposition \ref{propa}, we have
$C(j)\equiv 0 \mod 2^{r+1}$ when $j$ is even.
Therefore we have proved that
$\sum_{1\le j\le k, j : \mbox{odd}}m_j$ is even.
This shows that  first Pontrjagin class of $\zeta$ 
\[ p_1(\zeta)=8\sum_{j=1}^k j^2 m_j \]
is divisible by 16.  This proves our main theorem.


\begin{thebibliography}{99}
\bibitem{bro}
BROWDER,~W., Surgery and the theory of differentiable
transformation groups, 
In: Proceedings of the Conference on Transformation Groups,
New Orleans 1967, Springer-Verlag, 1968, pp.~1-46.

\bibitem{br}
BRUMFIEL,~G., { Homotopy equivalences of almost 
smooth manifolds},
Comment. Math. Helv. {\bf 46} (1971), 381--407.


\bibitem{haw}
HARDY,~G.H. and WRIGHT,~E.M., 
An introduction to the theory of numbers, 
$5^{th}$ Edition, Oxford Univ. Press, 1979.


\bibitem{kit}
KITADA,~Y., On the first Pontrjagin class of homotopy complex
projective spaces, 
Math. Slovaca, 62(2012), No. 3, 551-566.

\bibitem{mjm}
MADSEN,~I., JAMES,~R. and MILGRAM,~J., The classifying spaces
for surgery and cobordism of manifolds,
Annals of Mathematics Studies, Princeton Univ. Press, 1979.

\bibitem{mat}
MASUDA,~M. and TSAI,~Y-D.,
Tangential representations of cyclic group actions on
homotopy complex projective spaces,
Osaka J. Math., 23 (1985), 907-919.

\bibitem{may}
MONTGOMERY,~D. and YANG,~C.T., 
Differentiable actions on homotopy seven spheres II,
In: Proceedings of the Conference on Transformation Groups,
New Orleans 1967, Springer-Verlag, 1968, pp.~125-134.

\bibitem{mayf}
MONTGOMERY,~D. and YANG,~C.T.,
Free differentiable actions on homotopy spheres,
In: Proceedings of the Conference on Transformation Groups,
New Orleans 1967, Springer-Verlag, 1968, pp.~175-192.

\bibitem{qui}
QUILLEN,~D., The Adams conjecture, Topology 10 (1971), 67-80.

\bibitem{san}
SANDERSON,~B.J., Immersions and embeddings of projective spaces, 
Proc. London Math. Soc., 3(1964), 137--153.

\bibitem{spi}
SPIVAK,~M., Spaces satisfying Poincar\'{e} duality,
Top. 6(1967), 77--101.



\end{thebibliography}
\end{document}